\numberwithin{equation}{section}
\newtheorem{lemma}[equation]{Lemma}
\newtheorem{theorem}[equation]{Theorem}
\newtheorem{proposition}[equation]{Proposition}
\newtheorem{claim*}{Claim}
\theoremstyle{definition}
\newtheorem{remark}[equation]{Remark}
\newtheorem{example}[equation]{Example}
\definecolor{darkgreen}{rgb}{0,0.5,0}
\definecolor{rem}{rgb}{0.8,0,0}
\definecolor{new}{rgb}{0.3,0.1,0.9}
\definecolor{reply}{rgb}{0,0,0.8}
\definecolor{gray}{gray}{0.7}
\renewcommand{\det}{\text{det}}
\newcommand{\F}{\mathbb{F}}
\newcommand{\Q}{\mathbb{Q}}
\newcommand{\Z}{\mathbb{Z}}
\newcommand{\rhobar}{{\overline{\rho}}}
\newcommand{\calO}{\mathcal{O}}
\newcommand{\Qbar}{{\overline{\Q}}}
\newcommand{\Gal}{\text{Gal}}
\newcommand{\Frob}{\text{Frob}}
\newcommand{\GL}{\text{GL}}
\newcommand{\Ebar}{{\overline{E}}}
\renewcommand{\ker}{\text{ker}\:}
\DeclareMathOperator{\Tr}{\text{Tr}}
\newcommand{\oE}{\overline{E}}
\newcommand{\cL}{\mathcal{L}}
\begin{document}

\title {Symplectic criteria for elliptic curves, revisited}

\author{Nuno Freitas}
\address{Instituto de Ciencias Matemáticas (ICMAT),
          Nicolás Cabrera 13-15
         28049 Madrid, Spain}
\email{nuno.freitas@icmat.es}

\author{Alain Kraus}
\address{Sorbonne Universit\'e,
Universit\'e Paris Cit\'e,
CNRS, IMJ-PRG,
F-75005 Paris, France}
\email{alain.kraus@imj-prg.fr}

\author{Ignasi S\'anchez-Rodr\'iguez}
\address{Universitat de Barcelona (UB),
        Gran Via de les Corts Catalanes 585,
        08007 Barcelona, Spain}
\email{ignasi.sanchez@ub.edu}

\thanks{Freitas was partly supported by the PID 2022-136944NB-I00 grant of the MICINN
(Spain)}
\thanks{S\'anchez-Rodr\'iguez was party supported by the FPI grant of the PID PID2019-107297GB-I00 grant of the MICINN (Spain)}

\keywords{Weil pairing, symplectic isomorphisms, elliptic curves}
\subjclass[2010]{}

\begin{abstract}
Let $\ell$ and $p \geq 3$ be different primes. Let $E/\Q_\ell$ and $E'/\Q_\ell$ be elliptic curves with isomorphic $p$-torsion. Assume that $E$ has potentially multiplicative reduction.
We classify when all $G_{\Q_\ell}$-isomorphisms $\phi  : E[p] \to E'[p]$ have the same symplectic type and prove two new criteria to determine the type in that case. In particular, when both curves have multiplicative reduction, our results cover the case of unramified $p$-torsion which is not covered by the original criterion due to Kraus and Oesterl\'e.

We also give a variant of a symplectic criterion for the case when both $E$ and~$E'$ have good reduction and provide an algorithm to apply it.

As an application, we determine the symplectic type of all the mod $p \geq 5$ congruences between rational elliptic curves with conductor $\leq 500 000$ that satisfy the hypothesis of either of our criteria at some prime~$\ell$.
\end{abstract}

\maketitle

\section{Introduction}

Let $p \geq 3$ be a prime and $G_\Q :=\Gal(\Qbar / \Q)$. We consider triples $(E,E',p)$ where $E/\Q$ and $E'/\Q$ are elliptic curves with isomorphic $p$-torsion modules $E[p]$ and~$E[p]$ such that
either
\begin{itemize}
 \item[(i)] every $G_\Q$-modules isomorphism $\phi : E[p] \to E'[p]$
admits a multiple $\lambda \cdot \phi$ with $\lambda \in \F_p^\times$
preserving the Weil pairing; or
\item[(ii)] no $G_\Q$-isomorphism $\phi : E[p] \to E'[p]$ preserves the Weil pairing.
\end{itemize}
We say that the {\it symplectic type} of~$(E,E',p)$ is symplectic in case~(i) and anti-symplectic in case~(ii); we also say that $E[p]$ and $E'[p]$ are {\it symplectically isomorphic} in case~(i) and that they are {\it anti-symplectically isomorphic} in case~(ii).

In~\cite{symplectic}, the first two authors studied the problem of deciding the symplectic type of $(E,E',p)$, using a local approach. Indeed, they first determined a list $\cL_{(E,E',p)}$ of primes $\ell \neq p$ for which the curves $E/\Q_\ell$ and $E'/\Q_\ell$ contain enough information to decide the type of $(E,E',p)$. Second,
for each $\ell \in \cL_{(E,E',p)}$, they proved a {\it symplectic criterion} that allows to decide the symplectic type of $(E,E',p)$ using only local information at $\ell$.
In the published version of~\cite{symplectic} it is incorrectly stated that $\cL_{(E,E',p)}$ is a complete list, but this is only true for the primes where both curves have potentially good reduction; as we will explain in Theorem~\ref{thm:summary}, the results in this work make the list $\cL_{(E,E',p)}$ complete.

Let $G_{\Q_\ell} :=\Gal(\Qbar_\ell / \Q_\ell)$.
For elliptic curves $E/\Q_\ell$ and $E'/\Q_\ell$ with $G_{\Q_\ell}$-isomorphic $p$-torsion, we say that {\it a symplectic criterion exists} if and only if all $G_{\Q_\ell}$-isomorphisms $\phi : E[p] \to E'[p]$ have the same symplectic type; this is the case if and only if $E[p]$ admits only symplectic automorphisms (an automorphism of $E[p]$ is {\it symplectic} if and only if it has a scalar multiple preserving the Weil pairing).

In~\cite{symplectic} the authors classified when a symplectic criterion exists in the case of both $E/\Q_\ell$ and $E'/\Q_\ell$ having potentially good reduction and proved symplectic criteria covering all such cases. They also proved one criterion for the mixed case, that is,  when~$E/\Q_\ell$ has potentially multiplicative reduction and $E'/\Q_\ell$ has potentially good reduction. Moreover, the original symplectic criterion proved by Kraus--Oesterl\'e~\cite{KO} covers the case when both curves have multiplicative reduction and ramified $p$-torsion.

In this work, we determine when a symplectic criterion exists in the mixed reduction case (see Theorem~\ref{thm:existence}) and in the case where both curves have potentially multiplicative reduction (see Theorem~\ref{thm:existenceTate}). We also prove the following two new criteria, one in each of these cases.

Let $\ell$ and~$p\geq 3$ be primes satisfying $\ell \equiv 1 \pmod{p}$. We fix a $p$-th root of unity $\zeta_p \in \F_\ell$ and denote also by $\zeta_p$ its lift in $\Z_\ell^\times \subset \Qbar_\ell$. For $k=\F_\ell$ or $\Q_\ell$ and an elliptic curve $E/k$ we recall that~$P,Q \in E[p]$ is a {\it symplectic basis}  if $e_E(P,Q) =\zeta_p$ where $e_E$ is the Weil pairing.

\begin{theorem} \label{thm:multiplicative} Let $\ell$ and~$p\geq 3$ be primes satisfying $\ell \equiv 1 \pmod{p}$.
Let $E_1$ and $E_2$ be elliptic curves over~$\Q_\ell$ with split multiplicative reduction. Let ${\tilde{j}_i} := j_{E_i} \cdot \ell^{-v(j_{E_i})}$. Suppose that
\[ v(j_{E_i}) \equiv 0 \pmod{p} \quad \text{ and } \quad {\tilde{j}_i}^{\frac{\ell-1}{p}} \not\equiv 1 \pmod{\ell} \quad \text{ for } i = 1,2.
\]
Then, $E_1[p]$ and $E_2[p]$ are $G_{\Q_\ell}$-isomorphic and
not simultaneously symplectically and anti-symplectically isomorphic. Furthermore, they are symplectically isomorphic if and only if~$h_1 / h_2$ is a square mod~$p$,
where
$$h_i := \text{Log}_{\zeta_p}(\tilde{j}_i^{\frac{\ell-1}{p}} \pmod{\ell}),$$
where $\text{Log}_{\zeta_p}$ is the discrete logarithm of basis $\zeta_p$
in the subgroup of the $p$-th roots of unity in~ $\F_\ell^\times.$

\end{theorem}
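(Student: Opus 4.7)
The plan is to use Tate uniformization to equip each $E_i[p]$ with an explicit symplectic basis, express every $G_{\Q_\ell}$-isomorphism $\phi : E_1[p] \to E_2[p]$ in these bases, and read off the symplectic type from the determinant.

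\textbf{Step 1: Setting up Tate bases.} Let $q_i \in \Q_\ell^\times$ denote the Tate parameter of~$E_i$, so that $v(q_i) = -v(j_i)$ and the $q$-expansion $j_i = q_i^{-1} + 744 + O(q_i)$ yields $\tilde{j}_i\tilde{q}_i \equiv 1 \pmod\ell$. The hypotheses translate to $v(q_i) \in p\Z$ and $\tilde{q}_i^{(\ell-1)/p} \not\equiv 1 \pmod\ell$; using $\Q_\ell^\times/(\Q_\ell^\times)^p \cong \Z/p \oplus \F_\ell^\times/(\F_\ell^\times)^p$ (valid since $\ell \equiv 1 \pmod p$), this says $q_i$ is not a $p$-th power in $\Q_\ell^\times$ but lies in the unit summand, so $L := \Q_\ell(q_i^{1/p})$ is the unique unramified degree-$p$ extension of~$\Q_\ell$. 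Tate's uniformization identifies $E_i[p]$ as $G_{\Q_\ell}$-module with $\langle \zeta_p, q_i^{1/p}\rangle \subset \Qbar_\ell^\times/q_i^\Z$; with basis $P_i := \zeta_p$, $Q_i := q_i^{1/p}$ (viewed additively) the Galois action is $\sigma(P_i) = P_i$, $\sigma(Q_i) = Q_i + a_i(\sigma)P_i$, where $a_i : G_{\Q_\ell} \to \F_p$ is the Kummer character of~$q_i$, and the standard Weil pairing formula on the Tate curve yields $e_{E_i}(P_i, Q_i) = \zeta_p$, so both bases are symplectic.

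\textbf{Step 2: Existence of $\phi$ and well-definedness of the type.} Since $a_i \neq 0$, the $G_{\Q_\ell}$-module $E_i[p]$ is a nontrivial unipotent extension and $\langle P_i\rangle$ is its unique stable line. Any $G_{\Q_\ell}$-homomorphism $\phi : E_1[p] \to E_2[p]$ must therefore satisfy $\phi(P_1) = \lambda P_2$, $\phi(Q_1) = \mu Q_2 + c P_2$ for some $\lambda,\mu,c \in \F_p$, and $G_{\Q_\ell}$-equivariance is equivalent to $\lambda a_1 = \mu a_2$. As $a_1, a_2$ are both nonzero in the $1$-dimensional $\F_p$-space $\Z_\ell^\times/(\Z_\ell^\times)^p$, there is a unique $\kappa \in \F_p^\times$ with $a_1 = \kappa a_2$, and the $G_{\Q_\ell}$-isomorphisms are exactly those with $\lambda \in \F_p^\times$ and $\mu = \kappa\lambda$ (giving the existence claim). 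Applied with $E_2 = E_1$, every $G_{\Q_\ell}$-automorphism of $E_1[p]$ has $\mu = \lambda$, hence square determinant~$\lambda^2$; because both bases are symplectic, $\phi$ is symplectic iff $\det\phi$ is a square in $\F_p^\times$, so every such automorphism is symplectic. This makes the symplectic type of $(E_1,E_2,p)$ well-defined (ruling out being both symplectically and anti-symplectically isomorphic), and $\phi$ is symplectic iff $\det\phi = \lambda\mu = \kappa\lambda^2$ is a square, iff $\kappa$ is a square mod~$p$.

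\textbf{Step 3: Identifying $\kappa$ with $h_1/h_2$.} The main remaining task is to compute $\kappa$ explicitly. Writing $q_i = \ell^{pn_i}u_i$ with $u_i \in \Z_\ell^\times$, we get $q_i^{1/p} = \ell^{n_i}u_i^{1/p}$, so $a_i(\Frob_\ell)$ records how $\Frob_\ell$ acts on a $p$-th root of $u_i$ in~$L$. Reducing mod~$\ell$, a $p$-th root $\bar v_i \in \overline{\F_\ell}$ of $\bar u_i$ satisfies $\Frob_\ell(\bar v_i) = \bar v_i^{\,\ell} = \bar v_i \cdot \bar u_i^{(\ell-1)/p}$, giving $\zeta_p^{a_i(\Frob_\ell)} \equiv \tilde{q}_i^{(\ell-1)/p} \equiv \tilde{j}_i^{-(\ell-1)/p} \pmod\ell$, i.e.\ $a_i(\Frob_\ell) = -h_i$. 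Hence $\kappa = h_1/h_2$, and the criterion of Step~2 becomes the one in the statement. The main obstacle, such as it is, is bookkeeping: one must keep a single consistent $\zeta_p \in \Z_\ell$ throughout (used in $a_i$, $e_{E_i}$, and $\text{Log}_{\zeta_p}$) and cite the Weil pairing formula on the Tate curve with the correct sign convention, so that the identification $\kappa = h_1/h_2$ — rather than $\kappa = h_2/h_1$ or $-h_1/h_2$ — comes out cleanly.
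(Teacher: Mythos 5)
Your proof is correct and follows essentially the same route as the paper: write each $E_i[p]$ in the symplectic Tate basis $\{\zeta_p,q_i^{1/p}\}$, identify the upper-right entry of $\rhobar_{E_i,p}(\Frob_\ell)$ with the Kummer character of $q_i$, reduce mod $\ell$ to compute it as a discrete logarithm, and compare the two unipotent representations. The only differences are cosmetic: you re-derive on the spot the facts the paper cites (the square-determinant criterion from \cite[Lemma~6]{symplectic} and the well-definedness of the type from Theorem~\ref{thm:existenceTate}), and you pass from $\tilde q_i$ to $\tilde j_i$ via the $q$-expansion $j q\equiv 1\pmod q$ rather than via $j q=s^p$ as in \cite[Lemma~10.3]{FKdegree}; both give $a_i(\Frob_\ell)=-h_i$ and hence $\kappa=h_1/h_2$.
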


Note that if $E[p] \simeq E'[p]$ as $G_{\Q_\ell}$-modules and both curves have potentially multiplicative reduction, then taking a suitable quadratic twist reduces to the case where both curves have split multiplicative reduction (see \cite[Lemma 5]{symplectic}). Therefore, if a criterion exists, after a taking quadratic twist if needed, by Theorem~\ref{thm:existenceTate}, we can either apply the previous theorem or apply the original criterion of Kraus--Oesterl\'e; these two cases correspond with  $E[p]$ being unramified or ramified, respectively.

\begin{theorem}\label{thm:mixedReduciton}
Let $p \geq 3$ and $\ell$ be primes such that $\ell \equiv 1 \pmod{p}$.
Let $E/\Q_\ell$ and $E'/\Q_\ell$ be elliptic curves
such that $E$ has split multiplicative reduction and $E'$ has good reduction.
Suppose the following hold:
\begin{itemize}
 \item[(1)] $v(j_E) \equiv 0 \pmod{p}$ and ${\tilde{j}_E}^{\frac{\ell-1}{p}} \not\equiv 1 \pmod{\ell}$ where ${\tilde{j}_E} := j_E \cdot \ell^{-v(j_E)}$; and
\item[(2)] $\rhobar_{E',p}(\Frob_\ell)$ has order~$p$, where $\Frob_\ell \in G_{\Q_\ell}$ is a Frobenius element.

\end{itemize}
Then, $E[p]$ and $E'[p]$ are $G_{\Q_\ell}$-isomorphic and
not simultaneously symplectically and anti-symplectically isomorphic. Moreover,
\begin{itemize}
 \item[(i)] there is a symplectic basis of $E'[p]$ such that
$\rhobar_{E',p}(\Frob_\ell) = \left(\begin{smallmatrix}
  1 & h' \\
  0 & 1
\end{smallmatrix} \right)$; and
\item[(ii)] $E[p]$ and $E'[p]$ are symplectically isomorphic if and only if $-h / h'$ is a square mod~$p$ where
$h := \text{Log}_{\zeta_p}(\tilde{j}_i^{\frac{\ell-1}{p}} \pmod{\ell})$.
\end{itemize}
\end{theorem}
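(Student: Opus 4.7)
The plan is to describe both $E[p]$ and $E'[p]$ explicitly as $G_{\Q_\ell}$-representations in suitable symplectic bases, identify each with a nontrivial order-$p$ unipotent representation, and then classify the $G_{\Q_\ell}$-equivariant isomorphisms between them in order to decide when a symplectic one exists. The algebraic fact that makes the picture rigid is that any element of $\GL_2(\F_p)$ of order exactly $p$ is necessarily unipotent and conjugate in $\GL_2(\F_p)$ to $\left(\begin{smallmatrix}1 & 1 \\ 0 & 1 \end{smallmatrix}\right)$.

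For $E$, I would use the Tate uniformization $E(\Qbar_\ell) \simeq \Qbar_\ell^\times/q^\Z$, with Tate parameter satisfying $v(q) = -v(j_E)$. Hypothesis~(1) ensures $p \mid v(q)$ and that the unit part $u$ of $q$, which satisfies $u \equiv \tilde{j}_E^{-1} \pmod \ell$, is not a $p$-th power modulo~$\ell$. Hence $\Q_\ell(q^{1/p})$ is the unique unramified extension of $\Q_\ell$ of degree~$p$, from which $E[p]$ is unramified; moreover the standard Weil pairing formula on the Tate curve shows that $(P,Q):=(\zeta_p, q^{1/p})$ is a symplectic basis. Computing $\Frob_\ell(u^{1/p})/u^{1/p} \equiv u^{(\ell-1)/p} \pmod \ell$ and using $u \equiv \tilde{j}_E^{-1} \pmod \ell$, I would deduce that $\Frob_\ell$ acts on $(P,Q)$ by $\left(\begin{smallmatrix}1 & -h \\ 0 & 1 \end{smallmatrix}\right)$ with $h \in \F_p^\times$ as in the statement.

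For $E'$, good reduction together with $\ell \neq p$ gives that $E'[p]$ is unramified, and hypothesis~(2) identifies $\rhobar_{E',p}(\Frob_\ell)$ as a nontrivial unipotent element. Picking any basis realising this upper-triangular form and then rescaling its second vector by the inverse of the Weil-pairing value produces a symplectic basis that still makes the matrix unipotent, which proves~(i). The residual freedom among symplectic bases sharing the same fixed line is $(P'_1, P'_2) \mapsto (aP'_1, b P'_1 + a^{-1} P'_2)$, which rescales $h'$ by $a^{-2}$, so $h'$ is well-defined modulo squares. Since both representations then realise the same abstract order-$p$ unipotent representation, $E[p] \simeq E'[p]$ as $G_{\Q_\ell}$-modules.

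To conclude, I would write a general $G_{\Q_\ell}$-equivariant $\phi \colon E[p] \to E'[p]$ as a matrix $M$ in these symplectic bases and impose the commutation $M\,\rhobar_{E,p}(\Frob_\ell) = \rhobar_{E',p}(\Frob_\ell)\,M$; a short computation forces $M = \left(\begin{smallmatrix} a & b \\ 0 & -(h/h')\,a \end{smallmatrix}\right)$ with $a \in \F_p^\times$ and $b$ free, so $\det M = -(h/h')\,a^2$. A scaled multiple $\lambda\phi$ preserves the Weil pairing exactly when $\lambda^2 \det M = 1$ in $\F_p^\times$, i.e.\ when $-h/h'$ is a square mod~$p$, and because this condition is independent of $a$ and $b$ it simultaneously yields~(ii) and the non-simultaneity of the symplectic and anti-symplectic types. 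The main delicacy is careful sign bookkeeping: the minus sign in $-h/h'$ arises jointly from the Weil pairing convention on the Tate curve and from the inversion $u \equiv \tilde{j}_E^{-1} \pmod \ell$, and mishandling either one would reverse the criterion.
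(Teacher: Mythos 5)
Your proof is correct and follows the same conceptual skeleton as the paper's: describe $\rhobar_{E,p}(\Frob_\ell)$ via the Tate parametrization and $\rhobar_{E',p}(\Frob_\ell)$ via the unipotency of an order-$p$ element, place both in symplectic bases, and compare. The difference is one of self-containment: the paper computes a single intertwiner $g = \left(\begin{smallmatrix}1 & 0 \\ 0 & -h'/h\end{smallmatrix}\right)$, then invokes Lemma~6 of \cite{symplectic} to read off the type from $(\det g/p)$, cites Lemma~8 of \cite{symplectic} for the unipotent form of $\rhobar_{E',p}(\Frob_\ell)$, and appeals to Theorem~\ref{thm:existence} for the non-simultaneity; you instead prove the unipotency by the $p$-Sylow observation, parameterize the full space of intertwiners $M = \left(\begin{smallmatrix} a & b \\ 0 & -ah/h' \end{smallmatrix}\right)$ by a direct commutation calculation, and deduce both (ii) and the non-simultaneity at once since $\det M = -a^2 h/h'$ has a class mod squares independent of $a,b$. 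This is more elementary and slightly more informative (non-simultaneity falls out for free), while the paper's route is more modular and shorter given the cited lemmas.

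One small imprecision worth flagging: writing $q = u\ell^{v(q)}$ and using the identity $q\,j_E = s^p$ with $s \in \Z_\ell^\times$ gives $u\,\tilde{j}_E = s^p$, so $u \equiv \tilde{j}_E^{-1} \cdot \bar s^p \pmod{\ell}$ where $\bar s$ can be a \emph{nontrivial} $p$-th root of unity in $\F_\ell^\times$ (as $\ell \equiv 1 \pmod p$). Thus the literal congruence $u \equiv \tilde{j}_E^{-1} \pmod{\ell}$ need not hold; it holds only up to a $p$-th root of unity. This is harmless for your argument since you only use $u^{(\ell-1)/p} \equiv \tilde{j}_E^{-(\ell-1)/p} \pmod{\ell}$, where the factor $\bar s^{\ell-1}=1$ drops out, but the intermediate claim should be phrased accordingly. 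Also, I would say the minus sign in $-h/h'$ comes essentially from this inversion (and the choice of $(\zeta_p, q^{1/p})$ ordering) rather than ``jointly'' from the Weil pairing convention, but your final formula matches the paper.
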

Observe that, in the mixed reduction case, Proposition~\ref{P:multTwist} below implies that after twisting both curves by some $d \in \Q_\ell$ if needed, we can assume that either Theorem~\ref{thm:mixedReduciton} applies or $p=3$ and
~\cite[Theorem 14]{symplectic} applies.

We will also prove an alternative formulation of~\cite[Theorem~16]{symplectic}, i.e., the symplectic criterion for the case where both curves have good reduction. The advantage of the following version is that it is computationally accessible; we also provide a {\tt Magma} implementation~\cite{git}.

\begin{theorem} \label{thm:goodred}
Let $\ell$ and $p \geq 3$ be different primes.
Let $E$ and $E'$ be elliptic curves over $\Q_\ell$ with good reduction and isomorphic $p$-torsion modules.
Assume that $p \mid \#\rhobar_{E,p}(\Frob_\ell)$.

Then,
there are symplectic basis of $E[p]$ and $E'[p]$ such that
\begin{equation*}
 \rhobar_{E,p}(\Frob_\ell) =
 \begin{pmatrix} \frac{a_\ell}{2}  & h \\
 0 &  \frac{a_\ell}{2} \end{pmatrix}
  \quad \text{ and } \quad
 \rhobar_{E',p}(\Frob_\ell) =
 \begin{pmatrix} \frac{a_\ell}{2}  & h' \\
 0 &  \frac{a_\ell}{2} \end{pmatrix},
 \label{E:MatrixReps}
\end{equation*}
where $a_\ell := a_\ell(E) \equiv a_\ell(E') \pmod{p}$ and $h$, $h'$ both non-zero. Moreover, $E[p]$ and~$E'[p]$ are symplectically isomorphic if and only if $h/h'$ is a square mod~$p$; also, they are not simultaneously symplectically and anti-symplectically isomorphic.

\label{T:mainAbelian}
\end{theorem}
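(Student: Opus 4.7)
The plan is to reduce the entire statement to an explicit linear algebra computation with the two matrices $M := \rhobar_{E,p}(\Frob_\ell)$ and $M' := \rhobar_{E',p}(\Frob_\ell)$. Since both curves have good reduction at $\ell \neq p$, the restrictions $\rhobar_{E,p}|_{G_{\Q_\ell}}$ and $\rhobar_{E',p}|_{G_{\Q_\ell}}$ are unramified, hence completely determined by $M$ and $M'$, and the $G_{\Q_\ell}$-isomorphism $E[p] \simeq E'[p]$ amounts exactly to $M$ and $M'$ being $\GL_2(\F_p)$-conjugate; in particular they share the same trace, which gives $a_\ell(E) \equiv a_\ell(E') \pmod{p}$. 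The $p$-part of $\#\GL_2(\F_p)=p(p-1)^2(p+1)$ is exactly $p$, so the hypothesis $p \mid \#\rhobar_{E,p}(\Frob_\ell)$ forces $M$ to be non-semisimple; since its characteristic polynomial has $\F_p$-coefficients, $M$ must have a single repeated eigenvalue $\lambda \in \F_p$, and comparing with $\tr M$ yields $\lambda = a_\ell/2 \pmod{p}$ (which makes sense because $p$ is odd).

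To put $M$ into the claimed upper-triangular form with respect to a symplectic basis, I would pick any nonzero vector $v_1 \in \ker(M - \lambda\Id)$ and then, using non-degeneracy of the Weil pairing, choose $v_2$ with $e_E(v_1, v_2) = \zeta_p$; such a $v_2$ is found by writing $v_1$ in any preexisting symplectic basis and solving a single linear equation over $\F_p$. In the basis $(v_1,v_2)$, $M$ takes the form $\left(\begin{smallmatrix}\lambda & h\\ 0 & \lambda\end{smallmatrix}\right)$ with $h \neq 0$, for otherwise $M = \lambda\Id$ would be scalar (and hence semisimple), contradicting the divisibility hypothesis. Applying the same construction to $E'$ produces a symplectic basis in which $M'$ has the analogous form with the same $\lambda$ and some nonzero $h'$.

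For the ``type'' claims I would invoke the characterization recalled in the introduction: a symplectic criterion exists if and only if every $G_{\Q_\ell}$-automorphism of $E[p]$ is symplectic, i.e.\ has square determinant when written in a symplectic basis. Since $M$ is non-scalar its centralizer in $\GL_2(\F_p)$ is the commutative $\F_p$-algebra $\F_p[M]$, which in the basis $(v_1,v_2)$ consists precisely of the matrices $\left(\begin{smallmatrix} c & d\\ 0 & c\end{smallmatrix}\right)$ with $c\in\F_p^\times$, $d\in\F_p$; these all have square determinant $c^2$, proving the ``not simultaneously symplectic and anti-symplectic'' statement. For the explicit criterion, a $G_{\Q_\ell}$-module isomorphism $\phi:E[p]\to E'[p]$ corresponds in the chosen bases to a matrix $A \in \GL_2(\F_p)$ satisfying $AM = M'A$, and such an $A$ must send $\ker(M-\lambda\Id)$ to $\ker(M'-\lambda\Id)$, forcing it to be upper triangular, say $A = \left(\begin{smallmatrix}\alpha & \beta\\ 0 & \gamma\end{smallmatrix}\right)$. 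Equating the $(1,2)$-entries of $AM$ and $M'A$ yields the single constraint $\alpha h = \gamma h'$, whence $\det A = \alpha\gamma = (h'/h)\gamma^2$ lies in the same square class as $h'/h$, equivalently as $h/h'$; thus $\phi$ is symplectic precisely when $h/h'$ is a square modulo $p$.

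The only point requiring genuine care is verifying that the symplectic basis built from an eigenvector of $M$ yields a nonzero super-diagonal entry $h$, and this is exactly the input supplied by the divisibility hypothesis on $\#\rhobar_{E,p}(\Frob_\ell)$ through the non-semisimplicity of Frobenius. Beyond this the argument is elementary linear algebra over $\F_p$, which is also what makes the result amenable to the {\tt Magma} implementation announced in the statement: given Weierstrass equations for $E$ and $E'$ one locates an eigenvector of Frobenius acting on $E[p]$ and $E'[p]$, extends it to a symplectic basis as above, reads off $h$ and $h'$, and tests their ratio for quadratic residuosity.
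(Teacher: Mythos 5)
Your proof is correct and follows essentially the same strategy as the paper: use $p\mid\#\rhobar_{E,p}(\Frob_\ell)$ to force $\Frob_\ell$ into the non-semisimple shape $\left(\begin{smallmatrix}\lambda & *\\ 0&\lambda\end{smallmatrix}\right)$ in a symplectic basis, then read off the symplectic type from the determinant of any matrix conjugating one Frobenius matrix into the other. The only difference is that where the paper invokes Theorem~15, Corollary~4, Proposition~5 and Lemma~6 of~\cite{symplectic}, you reprove these facts in situ by explicit linear algebra (non-semisimplicity from the order, centralizer computation, and the conjugation equation $AM=M'A$), which makes the argument self-contained but does not change the underlying route.
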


To put everything together, we will show that the results in this work together with~\cite{symplectic} yield a complete list of symplectic criteria that are able to decide the symplectic type of $(E,E',p)$ whenever this is possible using only local information at a single prime~$\ell \neq p$.
\begin{theorem}\label{thm:summary}
Let $\ell$ and $p \geq 3$ be different primes. Let $(E,E',p)$ be as above.

Let $\cL_{(E,E',p)}$ be the set of primes such that a symplectic criterion exists for $E/\Q_\ell$ and~$E'/\Q_\ell$.

Then~$\ell \in \cL_{(E,E',p)}$ if and only if, after replacing $E$ and $E'$ by twists according
to Table~\ref{Table:TwistingLemmas} if necessary, exactly
one of the rows of Table~\ref{Table:CriteriaList} is satisfied. Moreover, the symplectic type of $(E,E',p)$ is determined by the
theorems listed under {\sc Criteria} in that row.
\end{theorem}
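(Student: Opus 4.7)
The plan is to carry out a case analysis on the reduction types of $E/\Q_\ell$ and $E'/\Q_\ell$ at $\ell$, after first normalising the pair via the twisting reductions of Table~\ref{Table:TwistingLemmas}. A symplectic criterion exists for $(E,E',p)$ locally at $\ell$ if and only if it exists for a simultaneous quadratic twist $(E_d,E'_d,p)$, and the symplectic type transfers predictably through such twists; hence we may always arrange that potentially multiplicative reduction becomes split multiplicative, and place the potentially good configurations in the canonical form required by the criteria below.

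After normalisation the pair falls into one of three disjoint macro-cases: (a)~both curves have potentially good reduction; (b)~one curve has potentially multiplicative and the other has potentially good reduction; (c)~both curves have potentially multiplicative reduction. For macro-case~(a) the existence of a criterion and the criteria themselves are fully catalogued in~\cite{symplectic}; when both curves have good reduction, we additionally note that Theorem~\ref{thm:goodred} is an equivalent, computationally convenient reformulation of~\cite[Theorem~16]{symplectic}. For macro-case~(b), Theorem~\ref{thm:existence} characterises when a criterion exists and Proposition~\ref{P:multTwist} further reduces the situation so that either Theorem~\ref{thm:mixedReduciton} applies or, in the residual $p=3$ sub-case, \cite[Theorem~14]{symplectic} does. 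For macro-case~(c), Theorem~\ref{thm:existenceTate} characterises existence, and the sub-case where $E[p]$ is ramified is handled by the Kraus--Oesterl\'e criterion~\cite{KO} whereas the unramified sub-case is handled by the new Theorem~\ref{thm:multiplicative}.

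Assembling Table~\ref{Table:CriteriaList} then amounts to transcribing the hypotheses of these existence theorems into its condition columns and recording alongside each row the criterion (or criteria) that decides the symplectic type in that situation. Exhaustiveness follows because the three macro-cases partition all possible reduction configurations and, within each, the sub-cases are pairwise disjoint by construction. Both directions of the ``if and only if'' are then immediate: if $\ell \in \cL_{(E,E',p)}$ then, after normalisation, $\ell$ falls into exactly one of the listed existence theorems and hence into exactly one row; conversely, any~$\ell$ matching a row inherits the criterion from the theorem cited there. The main obstacle is bookkeeping rather than mathematics: one must verify that each twist performed in Table~\ref{Table:TwistingLemmas} is compatible with the hypothesis format of the criterion it feeds into, and that the boundary cases --- notably the $p=3$ alternative in macro-case~(b) and the ramified/unramified dichotomy of $E[p]$ in macro-case~(c) --- are routed to the correct row of the table.
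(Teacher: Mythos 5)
Your proposal takes essentially the same approach as the paper: partition into the three macro-cases by reduction type at $\ell$, use invariance of criterion-existence under simultaneous quadratic twist to normalise via Table~\ref{Table:TwistingLemmas}, and then in each macro-case invoke the relevant existence theorem (Theorem~\ref{thm:existenceTate}, Theorem~\ref{thm:existence} together with Proposition~\ref{P:multTwist}, or the potentially good classification from~\cite{symplectic}) to map to the appropriate rows and criteria. The only point the paper makes explicit that you leave tacit is that the passage from the local symplectic type at $\ell$ to the global symplectic type of $(E,E',p)$ rests on \cite[Corollary~2]{symplectic}, and that a twist applied to $E$ forces the matching twist of $E'$ to preserve the $G_{\Q_\ell}$-module isomorphism; both are straightforward and your outline is correct.
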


As an application, we determine the symplectic type of all triples $(E,E',p)$ to which our Theorems~\ref{thm:multiplicative} and~\ref{thm:mixedReduciton} apply, where $p \geq 5$ and $E$ and $E'$ are elliptic curves in LMFDB~\cite{lmfdb} with conductor $\leq 500 000$;
as expected, for $p \geq 7$, we obtained results consistent with~\cite{CremonaFreitas}. Somewhat surprisingly, all examples obtained with this computation involve irreducible mod~$p$ representations. For completeness, we describe the family of all elliptic curves over~$\Q$ having mod~$5$ representation of the form~$\left(\begin{smallmatrix}
  \chi_5 & * \\ 0 & 1
\end{smallmatrix} \right)$,
and use it to generate examples of application of our criteria in the reducible case.

\subsection{Acknowledgements}
We thank John Cremona and Diana Mocanu for various  discussions regarding the algorithm in Section~\ref{s:goodred} and its implementation, and for other comments and corrections.
This work was completed while Freitas and S\'anchez-Rodr\'iguez
were at the Max Planck Institute for Mathematics in Bonn. These two authors are grateful to the Max Planck Institute for Mathematics in Bonn for its hospitality and financial support.

\section{The existence of symplectic criteria}


\subsection{Auxiliary results} We recall \cite[Theorem~15]{symplectic} which we will need.

\begin{theorem} \label{thm:thm15}
Let $\ell$ and $p$ be different primes with $p \geq 3$.
Let $F \subset \Qbar_\ell$ be a field. Let
$E/F$ and $E'/F$ be elliptic curves with isomorphic $p$-torsion modules.
Then a symplectic criterion exists if and only if one of the following conditions holds:
\begin{itemize}
 \item[(A)] $\rhobar_{E,p}(G_F)$ is non-abelian;
 \item[(B)] $\rhobar_{E,p}(G_F)$ is generated, up to conjugation,
by $\left(\begin{smallmatrix} a & 1 \\ 0 & a \end{smallmatrix}\right)$ where $a \in \F_p^*$.
\end{itemize}
\end{theorem}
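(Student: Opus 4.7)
My plan is to first reduce the existence of a symplectic criterion to a statement purely about the image $G := \rhobar_{E,p}(G_F) \subset \GL_2(\F_p)$, and then to carry out a case analysis on $G$. After a choice of basis of $E[p]$, the group $\Aut_{G_F}(E[p])$ is identified with the centralizer $C := Z_{\GL_2(\F_p)}(G)$. Given two $G_F$-isomorphisms $\phi, \phi' : E[p] \to E'[p]$, their difference yields a $G_F$-automorphism $\psi = \phi^{-1}\circ\phi' \in C$, and a direct comparison of pull-backs gives $(\phi')^\ast e_{E'} = e_E^{r\cdot\det\psi}$ whenever $\phi^\ast e_{E'} = e_E^{r}$. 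Hence the symplectic classes of $\phi$ and $\phi'$ in $\F_p^\times/(\F_p^\times)^2$ differ exactly by $\det\psi$, and a symplectic criterion exists if and only if $\det(C) \subset (\F_p^\times)^2$.

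For the abelian case, I would consider the commutative $\F_p$-subalgebra $R := \F_p[G] \subset \Mat_2(\F_p)$. Since any such subalgebra has $\F_p$-dimension at most $2$, either $R = \F_p$, whereby $G$ lies in the scalars and $C = \GL_2(\F_p)$ contains non-square determinants, or $R$ is one of the three $2$-dimensional $\F_p$-algebras $\F_p \times \F_p$, $\F_{p^2}$, or $\F_p[\epsilon]/(\epsilon^2)$. In the first two sub-cases, $C$ contains, up to conjugation, a split or non-split maximal torus, on which the determinant map surjects onto $\F_p^\times$, so no criterion exists. In the third sub-case, $R^\times$ is cyclic of order $p(p-1)$ and, up to conjugation, equals the group $H$ of matrices $\left(\begin{smallmatrix} c & d \\ 0 & c \end{smallmatrix}\right)$; hence $G \subset R^\times$ is itself cyclic, and if $G$ is not contained in the scalars, a generator must take the form $\left(\begin{smallmatrix} a & b \\ 0 & a \end{smallmatrix}\right)$ with $b \neq 0$, which is conjugate via $\mathrm{diag}(b^{-1}, 1)$ to $\left(\begin{smallmatrix} a & 1 \\ 0 & a \end{smallmatrix}\right)$, giving (B); then $C = H$, and every element has determinant $c^2 \in (\F_p^\times)^2$.

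For the non-abelian case (A), I would show in each sub-case that $C$ reduces to the scalars. If $G$ acts irreducibly, Schur's lemma forces the centralizer of $G$ in $\Mat_2(\F_p)$ to be $\F_p$ or $\F_{p^2}$; but in the latter case, taking bicommutants gives $G \subset \F_{p^2}^\times$, so $G$ is abelian, a contradiction. If $G$ acts reducibly, I would conjugate it into the Borel subgroup $B = T \ltimes U$ and exploit that the unipotent radical $U \cong \F_p$ is simple: since $[G,G] \subset U$ and $G$ is non-abelian, $[G,G] = U \subset G$, and moreover non-abelianness produces an element $g \in G$ with distinct diagonal entries (otherwise $G$ would sit inside the abelian group $H$); intersecting the centralizer of $U$ (upper triangular with equal diagonals) with that of $g$ (a diagonal torus after conjugation) leaves only the scalars. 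I expect this reducible non-abelian sub-case to be the main obstacle, since it requires simultaneously invoking the cyclicity of $U$ and the presence of a non-central semisimple element in $G$; once it is handled, combining the abelian and non-abelian analyses yields the equivalence claimed in the theorem.
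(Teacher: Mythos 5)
The paper itself gives no proof of this statement: it is recalled verbatim from~\cite[Theorem~15]{symplectic}, so there is no internal argument against which to compare. Judged on its own terms, your blind proof is correct and self-contained. The reduction of ``a symplectic criterion exists'' to $\det(C)\subset(\F_p^\times)^2$, where $C=Z_{\GL_2(\F_p)}\bigl(\rhobar_{E,p}(G_F)\bigr)\cong\Aut_{G_F}(E[p])$, matches exactly the characterization quoted in the Introduction (``\,$E[p]$ admits only symplectic automorphisms''), and your pull-back computation $(\phi')^\ast e_{E'}=e_E^{\,r\det\psi}$ is the right way to see it. The abelian analysis via $R:=\F_p[G]$ is clean: the bound $\dim_{\F_p}R\le 2$ is correct for $2\times2$ matrices since the centralizer of any non-scalar element already has dimension~$2$, and each of $\F_p$, $\F_p\times\F_p$, $\F_{p^2}$ yields a centralizer whose determinant surjects onto $\F_p^\times$, while only $\F_p[\epsilon]/(\epsilon^2)$ gives $\det(C)=(\F_p^\times)^2$; identifying this last case with condition (B) via a generator and the conjugation by $\mathrm{diag}(b^{-1},1)$ is fine. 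In the non-abelian case, Schur plus the bicommutant handles irreducible $G$, and for reducible $G$ your two observations --- $[G,G]=U$ since $U$ is simple of order $p$, and the existence of a non-central semisimple $g\in G$ since otherwise $G\subset H$ would be abelian --- do force $C\subset H\cap Z_{\GL_2(\F_p)}(g)=\F_p^\times\cdot\mathrm{Id}$, so no gap remains there despite your hedging. The only cosmetic remark is that in the $R\cong\F_p[\epsilon]/(\epsilon^2)$ case the clause ``if $G$ is not contained in the scalars'' is automatically satisfied, since scalar $G$ would give $\dim R=1$.
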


We recall also well known facts
about the Tate curve.

\begin{proposition} \label{P:theoryTate}
Let $\ell$ and $p \geq 3$ be different primes.
Let $E/\Q_\ell$ be an elliptic curve with potentially multiplicative reduction
and minimal discriminant~$\Delta_m(E)$.

(A) There is a symplectic basis of $E[p]$ in which the $p$-torsion representation is of the form
\begin{equation}
 \label{E:tate0}
 \rhobar_{E,p} \simeq \chi \otimes
 \begin{pmatrix}\chi_p & h \\ 0 & 1 \end{pmatrix},
\end{equation}
where $\chi_p$ is the mod~$p$ cyclotomic
character and $\chi$ is
character of order at most 2. Moreover, $E$ has additive reduction if and only if~$\chi$ is ramified and $E$ has split multiplicative reduction if and only if~$\chi = 1$.

(B) Assume $E$ has split multiplicative reduction. Let $q$ be the Tate parameter of~$E$. Then,
\begin{enumerate}
 \item Via Tate's uniformization, we can assume that~\eqref{E:tate0} is given in the basis is $\{\zeta_p, q^{1/p}\}/q^{\Z}$.
 \item We have $h|_{I_\ell} = 0 \iff \#\rhobar_{E,p}(I_\ell) = 1  \iff p \mid v(\Delta_m(E)) = -v(j_E)$.
 \item We have $\Q_\ell(E[p]) = \Q_\ell(\zeta_p,q^{1/p})$.
 \item For $\sigma \in G_{\Q_\ell(\zeta_p)}$, we have $\sigma(j_E^{1/p}) = \zeta_p^{-h(\sigma)} \cdot j_E^{1/p}$ for any choice of~$j_E^{1/p}$ a $p$-th root of $j_E$.
\end{enumerate}
\end{proposition}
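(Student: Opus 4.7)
The plan is to deduce everything from Tate's uniformization of elliptic curves with multiplicative reduction. I would first establish part (A) in the split multiplicative case (which provides the content), then reduce the general potentially multiplicative case to the split one via a quadratic twist, and finally derive the four items of (B) as direct consequences of Tate's formulas.

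For part (A), assume first that $E/\Q_\ell$ has split multiplicative reduction with Tate parameter $q \in \Q_\ell^\times$ satisfying $v(q)>0$. Tate's theorem gives a $G_{\Q_\ell}$-equivariant isomorphism $E(\Qbar_\ell) \simeq \Qbar_\ell^\times/q^\Z$, so the $p$-torsion is the image of $\{\zeta_p^a \, q^{b/p} : 0 \le a, b <p\}$. I would take $P = \zeta_p$ and $Q = q^{1/p}$ as a basis: then $\sigma(\zeta_p) = \zeta_p^{\chi_p(\sigma)}$ and $\sigma(q^{1/p}) = \zeta_p^{h(\sigma)} q^{1/p}$ for a well-defined cocycle $h : G_{\Q_\ell} \to \F_p$, yielding the upper-triangular shape of~\eqref{E:tate0} with $\chi = 1$. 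To check that the basis is symplectic, I would invoke the standard computation of the Weil pairing on the Tate curve, which gives $e_E(\zeta_p, q^{1/p}) = \zeta_p$ (see, e.g., Silverman, \emph{Advanced Topics}, Ch.~V). For general potentially multiplicative reduction, a unique quadratic twist $E^{(d)}$ has split multiplicative reduction; applying the split case to $E^{(d)}$ and untwisting produces the tensor factor $\chi$, which is the quadratic character cutting out $\Q_\ell(\sqrt{d})$. The three reduction types correspond precisely to $\chi$ trivial, $\chi$ unramified quadratic, and $\chi$ ramified quadratic, giving the last two assertions of~(A).

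For part (B), item~(1) is tautological from the construction above. For item~(2), since $\ell \neq p$ the extension $\Q_\ell(\zeta_p)/\Q_\ell$ is unramified, so $\chi_p|_{I_\ell} = 1$ and $\rhobar_{E,p}(I_\ell)$ is trivial iff $h|_{I_\ell} = 0$ iff $q^{1/p}$ is unramified over $\Q_\ell$. Writing $q = \ell^{v(q)} u$ with $u \in \Z_\ell^\times$, and using that $u^{1/p}$ always lies in an unramified extension for $\ell\neq p$, the extension $\Q_\ell(q^{1/p})/\Q_\ell$ is unramified if and only if $p \mid v(q)$; combined with the well-known equalities $v(q) = v(\Delta_m(E)) = -v(j_E)$ coming from Tate's theory, this gives (2). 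Item~(3) follows because $E[p]$ corresponds to the classes of the $\zeta_p^a q^{b/p}$, whose coordinates generate exactly $\Q_\ell(\zeta_p, q^{1/p})$.

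For item~(4), the main input is the $q$-expansion $j_E = q^{-1} + 744 + 196884\, q + \cdots$, which shows that $j_E \cdot q \in 1 + q\Z_\ell[[q]]$ is a principal unit in $\Z_\ell$. Since $p \neq \ell$, Hensel's lemma yields a unique $p$-th root $u := (j_E q)^{1/p} \in \Q_\ell$, so $\sigma(u) = u$ for all $\sigma \in G_{\Q_\ell}$. Writing $j_E^{1/p} = u \cdot q^{-1/p}$ (for a compatible choice of $p$-th root), the formula $\sigma(q^{1/p}) = \zeta_p^{h(\sigma)} q^{1/p}$ from~(1) gives $\sigma(q^{-1/p}) = \zeta_p^{-h(\sigma)} q^{-1/p}$, whence $\sigma(j_E^{1/p}) = \zeta_p^{-h(\sigma)} j_E^{1/p}$ for every $\sigma \in G_{\Q_\ell(\zeta_p)}$ (on this subgroup $h$ is indeed a homomorphism). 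The only delicate point in the whole argument is the verification of the Weil-pairing formula on the Tate curve, which guarantees that the basis $\{\zeta_p, q^{1/p}\}$ is symplectic; everything else is bookkeeping with Tate's uniformization.
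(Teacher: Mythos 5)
Your proposal is correct and follows essentially the same route as the paper. The paper deduces (A), (B)(1)--(3) from the Tate curve theory (citing Silverman and Fukuda--Yoshikawa for the symplectic basis claim, exactly as you do), and proves only (B)(4); there it invokes \cite[Lemma 10.3]{FKdegree} for the identity $q\,j_E = s^p$ in $\Q_\ell$, which is precisely what you re-derive from the $q$-expansion $j_E q = 1+744q+\cdots \in 1+\ell\Z_\ell$ via Hensel, so the two arguments coincide in substance. The one small point you elide is the clause ``for any choice of $j_E^{1/p}$'': you verify the formula for the specific root $u\cdot q^{-1/p}$, whereas the paper adds a one-line check that replacing $j_E^{1/p}$ by $\zeta_p^k j_E^{1/p}$ does not change the conclusion when $\sigma$ fixes $\zeta_p$; this is immediate but is part of what the statement asserts and should be recorded.
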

\begin{proof}
Everything in this proposition follows from the well known theory of the Tate curve,
treated in detail
in~\cite[Chap. V]{SilvermanII};
in particular, see Proposition~6.1 and Exercise~5.13 in {\it loc. cit.}.
For the claim that~\eqref{E:tate0} holds in a symplectic basis see~\cite[\S2.4 and Remark~2.10]{FukudaWeilPairing}.

We will prove only (B) part (4).

Fix $\zeta_p \in \Qbar^*_\ell$ a $p$-th root of unity
and $q^{1/p} \in \Qbar^*_\ell$ a $p$-th root of~$q$. Consider the basis in~(1), so that $\rhobar_{E,p}$ is given by~\eqref{E:tate0} with $\chi = 1$.
For~$\sigma \in G_{\Q_\ell}$, the value $h(\sigma)$ is defined by $\sigma(q^{1/p}) = \zeta_p^{h(\sigma)}q^{1/p}$.

We have $q \cdot j_E = s^p$ in~$\Q_\ell$ by \cite[Lemma 10.3]{FKdegree}.
Thus $j_E^{1/p} = q^{-1/p} \cdot s$ is a $p$-th root of $j_E^{1/p}$ satisfying
$$\sigma(j_E^{1/p}) = \sigma(q^{-1/p} \cdot s) = \zeta_p^{-h(\sigma)} q^{-1/p} \cdot s = \zeta_p^{-h(\sigma)}j_E^{1/p}.$$
Finally, we show the value $-h(\sigma)$ is independent of the choice of~$j_E^{1/p}$ when $\sigma \in G_{\Q_\ell(\zeta_p)}$. Any other $p$-th root of $j_E$ is of the form
$r = \zeta_p^k \cdot j_E^{1/p}$.
Thus, for $\sigma \in G_{\Q_\ell(\zeta_p)}$, we have
\[ \sigma(r) = \sigma (\zeta_p^k \cdot j_E^{1/p}) = \zeta_p^k \cdot  \sigma(j_E^{1/p}) =  \zeta_p^k \cdot \zeta_p^{-h(\sigma)} \cdot j_E^{1/p}
= \zeta_p^{-h(\sigma)} \cdot r,
\]
as desired.
\end{proof}
\subsection{The case of potentially multiplicative reduction}
For an elliptic curve $E/\Q_\ell$ with potentially multiplicative reduction and $j$-invariant $j_E$ we define $\tilde{j}_E:=j_E\cdot \ell^{-v(j_E)}$.

We prove the existence theorem.

\begin{theorem}\label{thm:existenceTate}
Let $\ell$ and $p \geq 3$ be different primes. Let $E$ and $E'$ be elliptic curves
over~$\Q_\ell$ with isomorphic $p$-torsion modules.
Suppose that both curves have potentially multiplicative reduction.
Then a symplectic criterion exists if and only if either $p \nmid v(j_E)$ or
\[ p \mid v(j_E), \quad \ell \equiv 1 \pmod{p}  \quad \text{ and }
 \quad {\tilde{j}_E}^{\frac{\ell-1}{p}} \not\equiv 1 \pmod{\ell}.
\]
\end{theorem}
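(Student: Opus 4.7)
The strategy is to apply Theorem~\ref{thm:thm15} directly to $\rhobar_{E,p}$, which by Proposition~\ref{P:theoryTate}(A) has the form $\chi \otimes \rho$ with $\rho = \left(\begin{smallmatrix} \chi_p & h \\ 0 & 1 \end{smallmatrix}\right)$ and $\chi$ a character of order at most~$2$. Because $\chi$ is scalar, commutators of $\rhobar_{E,p}(G_{\Q_\ell})$ equal those of $\rho(G_{\Q_\ell})$, and multiplying a matrix of the form $\left(\begin{smallmatrix} a & 1 \\ 0 & a \end{smallmatrix}\right)$ by $\pm 1$ yields (up to conjugation) $\left(\begin{smallmatrix} \pm a & 1 \\ 0 & \pm a \end{smallmatrix}\right)$, so conditions (A) and (B) of Theorem~\ref{thm:thm15} hold for $\rhobar_{E,p}$ if and only if they hold for $\rho$. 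Since $v(j_E)$ and $\tilde{j}_E$ are invariant under quadratic twist, I may therefore assume that $E$ has split multiplicative reduction.

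The next step is a four-way split according to whether $p \mid v(j_E)$ and whether $\ell \equiv 1 \pmod{p}$; Proposition~\ref{P:theoryTate}(B)(2) tells us that $p \mid v(j_E)$ is equivalent to $h|_{I_\ell} = 0$. If $p \nmid v(j_E)$ and $\ell \not\equiv 1 \pmod{p}$, the image contains a non-trivial unipotent from inertia together with $\rho(\Frob_\ell)$ whose eigenvalues $\ell,1$ are distinct, and a direct check shows these do not commute, so the image is non-abelian and (A) holds. If $p \nmid v(j_E)$ and $\ell \equiv 1 \pmod{p}$, then $\chi_p$ is trivial on $G_{\Q_\ell}$ and the image is a non-trivial cyclic subgroup of the unipotent group, so (B) applies with $a=1$. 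If $p \mid v(j_E)$ and $\ell \not\equiv 1 \pmod{p}$, inertia acts trivially and the image is cyclic generated by the semisimple element $\rho(\Frob_\ell)$, failing both (A) and (B). Finally, if $p \mid v(j_E)$ and $\ell \equiv 1 \pmod{p}$, the image is cyclic generated by $\rho(\Frob_\ell) = \left(\begin{smallmatrix} 1 & h(\Frob_\ell) \\ 0 & 1 \end{smallmatrix}\right)$; this satisfies (B) precisely when $h(\Frob_\ell) \neq 0$, and is otherwise trivial.

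What remains, and is the most delicate point, is to identify in the last case the condition $h(\Frob_\ell) \neq 0$ with $\tilde{j}_E^{(\ell-1)/p} \not\equiv 1 \pmod{\ell}$. The plan is to invoke Proposition~\ref{P:theoryTate}(B)(4): since $\ell \equiv 1 \pmod p$ we have $\Q_\ell(\zeta_p) = \Q_\ell$, so $\Frob_\ell$ belongs to $G_{\Q_\ell(\zeta_p)}$ and acts on any chosen $p$-th root of $j_E$ via multiplication by $\zeta_p^{-h(\Frob_\ell)}$. Writing $j_E = \ell^{pm}\tilde{j}_E$ (using $p \mid v(j_E)$) and choosing $j_E^{1/p} = \ell^m \tilde{j}_E^{1/p}$ inside $\Q_\ell^{\un}$, where the $p$-th root of the unit $\tilde{j}_E$ exists by Hensel, one can reduce modulo the maximal ideal: since $\Frob_\ell$ acts on $\bar{\F}_\ell$ as $x \mapsto x^\ell$, the ratio $\Frob_\ell(\tilde{j}_E^{1/p})/\tilde{j}_E^{1/p}$ becomes the $(\ell-1)$-th power of the residue of $\tilde{j}_E^{1/p}$, which by $(\tilde{j}_E^{1/p})^p = \tilde{j}_E$ equals $\tilde{j}_E^{(\ell-1)/p} \pmod{\ell}$. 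Comparing with $\zeta_p^{-h(\Frob_\ell)}$ yields $\tilde{j}_E^{(\ell-1)/p} \equiv 1 \pmod{\ell}$ iff $h(\Frob_\ell) = 0$, completing the proof. The only non-routine step in this plan is this final reduction modulo~$\ell$; the rest is a straightforward case analysis on the image of~$\rho$.
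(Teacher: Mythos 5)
Your proof is correct, but it takes a genuinely different and more self-contained route than the paper's. The paper reduces to split multiplicative reduction by citing the invariance of symplectic criteria under simultaneous quadratic twist (\cite[Corollary~6]{symplectic}); you instead argue directly that conditions~(A) and~(B) of Theorem~\ref{thm:thm15} are preserved under tensoring by the quadratic character $\chi$, which is true but stated tersely --- to be fully rigorous in case~(B) one should check that if $\rho(G_{\Q_\ell}) = \langle M_a \rangle$ with $M_a = \left(\begin{smallmatrix}a & 1 \\ 0 & a\end{smallmatrix}\right)$, the image of $\chi\otimes\rho$ is itself cyclic and generated, up to conjugation, by $\pm M_a$ (this holds because $-I$ has odd order $p\cdot\mathrm{ord}(a)$ obstructions, and $\mathrm{diag}(1,-1)$ conjugates $-M_a$ into $M_{-a}$). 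For the equivalence itself, the paper does not carry out the four-way case analysis you perform: it instead deduces $\ell\equiv 1\pmod p$ and $\tilde{j}_E^{(\ell-1)/p}\not\equiv 1\pmod\ell$ from the conditions $p\mid\#\rhobar_{E,p}(G_{\Q_\ell})$ and $p\mid v(j_E)$ by invoking \cite[Theorem~3.1]{FKdegree} as a black box, and for the converse appeals to the Kraus--Oesterl\'e criterion when $p\nmid v(j_E)$ and to \cite[Theorem~3.1]{FKdegree} again when $p\mid v(j_E)$. Your case analysis plus the explicit identification of $h(\Frob_\ell)\neq 0$ with $\tilde{j}_E^{(\ell-1)/p}\not\equiv 1\pmod\ell$ via Proposition~\ref{P:theoryTate}(B)(4) re-derives exactly the content needed from \cite{FKdegree}; in fact, your final mod-$\ell$ computation is the same one the paper carries out, but in the proof of Theorem~\ref{thm:multiplicative} rather than here. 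The upshot is that your proof avoids two external references at the cost of doing a bit more elementary matrix bookkeeping, and it effectively merges arguments that the paper distributes between this theorem, Theorem~\ref{thm:multiplicative}, and the cited degree formula.
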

\begin{proof}
Let $\Q_\ell(\sqrt{d})$ be the field fixed the character~$\chi$ in~\eqref{E:tate0} when applying Proposition~\ref{P:theoryTate} to~$E$ or~$E'$ (we obtain the same $\chi$ as $E[p] \simeq E'[p]$). The existence of a symplectic criterion is invariant by taking simultaneous quadratic twists (see~\cite[Corollary~6]{symplectic}).
Then, after replacing $E$ by $dE$ and $E'$ by $dE'$, both curves have split multiplicative reduction and
\[
\rhobar_{E',p} \simeq \rhobar_{E,p} \simeq
 \begin{pmatrix} \chi_p & h \\ 0 & 1 \end{pmatrix} \quad \text{where} \quad
 \#\rhobar_{E,p}(I_\ell) = 1
 \iff
p \mid v(j_E) \iff h|_{I_\ell} = 0.
\]
Suppose a symplectic criterion exists and $p \mid v(j_E)$. We have $h|_{I_\ell} = 0$ and $\rhobar_{E,p}(G_{\Q_\ell})$ satisfies~(A) or (B) in Theorem~\ref{thm:thm15}. Thus $h(\Frob_\ell) \neq 0$ and $p \mid \#\rhobar_{E,p}(G_{\Q_\ell})$ in both cases. The simultaneous conditions
$p \mid \#\rhobar_{E,p}(G_{\Q_\ell})$ and $p \mid v(j_E)$ are compatible only with the second part of case 1.2) of~\cite[Theorem~3.1]{FKdegree} (note that $E$ has split multiplicative reduction so $-c_6(E)$ is a square in~$\Q_\ell$ so we are in case 1) of {\it loc. cit.}). Thus $\ell \equiv 1 \pmod{p}$ and ${\tilde{j}_E}^{\frac{\ell-1}{p}} \not\equiv 1 \pmod{\ell}$.

For the converse, note that if $p \nmid v(j_E)$ then the original criterion of Kraus--Oesterl\'e~\cite[\S4.3]{symplectic} exists an can be applied. If instead $p \mid v(j_E)$,
$\ell \equiv 1 \pmod{p}$ and ${\tilde{j}_E}^{\frac{\ell-1}{p}} \not\equiv 1 \pmod{\ell}$
then, again by~\cite[Theorem 3.1]{FKdegree},
we have $\rhobar_{E',p} \simeq \rhobar_{E,p} \simeq
 \left(\begin{smallmatrix} 1 & h \\ 0 & 1 \end{smallmatrix}\right)$ with $h(\Frob_\ell)\neq 0$. Thus case (B) of Theorem~\ref{thm:thm15} holds and a symplectic criterion exists.
\end{proof}

\subsection{The mixed reduction case}
Recall that for an elliptic curve $E/\Q_\ell$ with potentially good reduction,
the semistability defect~$e(E)$ of $E$ is the degree of the
minimal extension of~$\Q_\ell^{\text{un}}$ where
$E$ obtains good reduction. It is well known that $e(E) \in \{1, 2, 3, 4, 6, 8, 12, 24\}$.

\begin{proposition} \label{P:multTwist}
Let $\ell$ and~$p \geq 3$ be different primes. Let $E$ and $E'$ be elliptic
curves over~$\Q_\ell$ such that
$E[p]$ and $E'[p]$ are isomorphic $G_{\Q_{\ell}}$-modules.

Suppose that $E$ has potentially multiplicative reduction and $E'$ has potentially good
reduction.

Then,
after taking a simultaneous
quadratic twist of $E$ and $E'$ by some $d \in \Q_\ell$ if necessary,
we are in one of the following cases:
\begin{itemize}
 \item[(i)] $E'$ has good reduction, $E$ has split multiplicative reduction and
 $p \mid v(\Delta_m(E))$,
 where $\Delta_m(E)$ is the discriminant of a minimal model for~$E$.
 \item[(ii)] $E'$ has potentially good reduction with semistability defect $e(E')=3$, $p=3$ and $E$ has split multiplicative reduction.
 \end{itemize}
\end{proposition}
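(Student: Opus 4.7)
The plan is to extract $d$ from the Tate-form decomposition of $\rhobar_{E,p}$ provided by Proposition~\ref{P:theoryTate}(A). That proposition gives a character $\chi$ of $G_{\Q_\ell}$ of order at most~$2$ with
\[
\rhobar_{E,p} \simeq \chi \otimes \begin{pmatrix} \chi_p & h \\ 0 & 1 \end{pmatrix};
\]
choosing $d \in \Q_\ell^\times$ with $\chi_d = \chi$ cancels $\chi$ in the twist, and the same proposition then shows that $dE$ has split multiplicative reduction. Because $E[p] \simeq E'[p]$ as $G_{\Q_\ell}$-modules, the matrix description transfers to $\rhobar_{dE',p}$, and it remains to place $dE'$ in case~(i) or case~(ii).

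Next I case-split on $h|_{I_\ell}$. Since $\chi_p$ is unramified at $\ell \neq p$, the restriction of $\rhobar_{dE',p}$ to $I_\ell$ is the unipotent matrix with entry $h|_{I_\ell}$ above the diagonal. If $h|_{I_\ell} = 0$, then $\rhobar_{dE',p}$ is unramified, so N\'eron--Ogg--Shafarevich forces $dE'$ to have good reduction, while Proposition~\ref{P:theoryTate}(B)(2) applied to $dE$ yields $p \mid v(\Delta_m(dE))$; this is case~(i).

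The remaining case is $h|_{I_\ell} \neq 0$, in which $\rhobar_{dE',p}(I_\ell)$ is cyclic of order~$p$ and I must pin down both $p$ and $e(dE')$. Since $dE'$ has potentially good reduction, the image $\Phi$ of $I_\ell$ on the Tate module $T_p(dE')$ is a finite subgroup of $\GL_2(\Z_p)$ of order~$e(dE')$, and $\Phi$ reduces modulo~$p$ to the order-$p$ image on $(dE')[p]$. The congruence kernel $1 + p M_2(\Z_p)$ of $\GL_2(\Z_p) \to \GL_2(\F_p)$ is pro-$p$, so $\Phi \cap (1 + p M_2(\Z_p))$ has $p$-power order; multiplying by the order of the mod-$p$ image forces $e(dE')$ itself to be a $p$-power divisible by~$p$. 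Intersecting with the classical bound $e(dE') \in \{1,2,3,4,6,8,12,24\}$ leaves only $p = 3$ and $e(dE') = 3$, yielding case~(ii). The crux is precisely this pro-$p$ kernel observation, which simultaneously rules out $e(dE') \in \{6, 12, 24\}$ and forbids $p \geq 5$ from occurring in this branch; everything else is assembly of Proposition~\ref{P:theoryTate} and N\'eron--Ogg--Shafarevich.
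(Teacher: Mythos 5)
Your proof is correct and follows essentially the same route as the paper: twist by the Tate character to make $E$ split multiplicative, then case-split on whether inertia acts trivially on the $p$-torsion, using the constraint $e(dE')\in\{1,2,3,4,6,8,12,24\}$ in the ramified case. The one small difference is how you pin down $e(dE')$: the paper invokes the equality $\#\rhobar_{dE',p}(I_\ell)=e(dE')$ directly (which rests on the torsion-freeness of $1+pM_2(\Z_p)$ for $p\geq 3$), whereas you only use that the congruence kernel is pro-$p$, concluding $e(dE')$ is a $p$-power divisible by $p$ and then letting the classical bound finish the job — a marginally more elementary hypothesis that costs nothing here; note that your N\'eron--Ogg--Shafarevich invocation in case~(i) in fact implicitly uses the same torsion-free fact to pass from ``$E[p]$ unramified'' to ``$T_p$ unramified,'' so the two cases are not quite as decoupled as the writeup suggests.
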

\begin{proof}
Let $\Q_\ell(\sqrt{d})$ be the field fixed by~$\chi$ in~\eqref{E:tate0} when applying Proposition~\ref{P:theoryTate} to~$E$. Then $W=dE$ has split multiplicative reduction and satisfies
\[
\rhobar_{W,p} \simeq
 \begin{pmatrix} \chi_p & h \\ 0 & 1 \end{pmatrix} \quad \text{where} \quad
 \#\rhobar_{W,p}(I_\ell) = 1
 \iff
p \mid v(\Delta_m(W)).
\]
Since $\chi_p$ is unramified at~$\ell$, we have that $\rhobar_{W,p}(I_\ell)$ is of order 1 or~$p$.

Write $W' = dE'$. This curve
has potentially good reduction.
From \cite[Lemma~11]{symplectic}, we have $W[p] \simeq W'[p]$ as $G_{\Q_{\ell}}$-modules. In particular,
$\# \rhobar_{W,p}(I_\ell) = \# \rhobar_{W',p}(I_\ell)= e(W')$.

We split into cases:

(i) Suppose $\# \rhobar_{W,p}(I_\ell) = 1$.
Then $p \mid v(\Delta_m(W))$ and $e(W') = 1$ (i.e. $W'$ has good reduction)

(ii) Suppose $\# \rhobar_{W,p}(I_\ell) = p$.
Since $e(W') \in \{1, 2, 3, 4, 6, 8, 12, 24\}$ we conclude $p= e(W')=~3$.
\end{proof}

We now prove the existence of a symplectic criterion.

\begin{theorem}\label{thm:existence}
Let $\ell$ and $p \geq 3$ be different primes. Let $E$ and $E'$ be elliptic curves
over~$\Q_\ell$ with isomorphic $p$-torsion modules.
Suppose that $E$ has potentially multiplicative reduction and $E'$ has potentially good
reduction.
A symplectic criterion exists if and only if either
\begin{itemize}
 \item[(a)] $p=3$ and $E'$ has semistability defect $e(E') \in \{3,6 \}$; or
 \item[(b)] $\ell \equiv 1 \pmod{p}$, $v(j_E) \equiv 0 \pmod{p}$ and
 ${\tilde{j}_E}^{\frac{\ell-1}{p}} \not\equiv 1 \pmod{\ell}$ where ${\tilde{j}_E} = j_E \cdot \ell^{-v(j_E)}$.
\end{itemize}
\end{theorem}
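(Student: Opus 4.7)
The plan is to use Proposition~\ref{P:multTwist} to reduce, via a simultaneous quadratic twist (which preserves the existence of a symplectic criterion by \cite[Corollary~6]{symplectic}), to the two mutually exclusive situations~(i) and~(ii) of that proposition. Writing $W = dE$ and $W' = dE'$ for the twists, $W$ has split multiplicative reduction in both cases, so Proposition~\ref{P:theoryTate}(A) puts $\rhobar_{W,p}$ in the upper-triangular form $\left(\begin{smallmatrix} \chi_p & h \\ 0 & 1 \end{smallmatrix}\right)$ in a suitable symplectic basis. In each case I will then apply Theorem~\ref{thm:thm15} to decide exactly when a symplectic criterion exists, and convert the outcome into conditions on the original data.

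For case~(i), the hypothesis $p \mid v(\Delta_m(W)) = -v(j_E)$ together with Proposition~\ref{P:theoryTate}(B)(2) forces $h|_{I_\ell} = 0$; hence $\rhobar_{W,p}$ is unramified and its image is the abelian cyclic group generated by $\rhobar_{W,p}(\Frob_\ell) = \left(\begin{smallmatrix} \ell & h(\Frob_\ell) \\ 0 & 1 \end{smallmatrix}\right)$. Alternative~(A) of Theorem~\ref{thm:thm15} thus cannot occur, and (B) will hold precisely when this Frobenius has a single repeated eigenvalue together with a nontrivial nilpotent part, i.e.\ $\ell \equiv 1 \pmod p$ and $h(\Frob_\ell) \neq 0$. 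I will then use Proposition~\ref{P:theoryTate}(B)(4) to rewrite the latter condition as $j_E^{1/p} \notin \Q_\ell^{\un}$; factoring $j_E = \tilde{j}_E \cdot \ell^{v(j_E)}$ with $p \mid v(j_E)$, this becomes the statement that $\tilde{j}_E \bmod \ell$ is not a $p$-th power in $\F_\ell^\times$, i.e.\ $\tilde{j}_E^{(\ell-1)/p} \not\equiv 1 \pmod \ell$. Since $v(j_E)$ and $\tilde{j}_E$ are twist-invariant, case~(i) of the proposition will correspond exactly to condition~(b) of the theorem.

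For case~(ii), $p = 3$ and $\#\rhobar_{W,p}(I_\ell) = e(W') = 3$, so $h|_{I_\ell}$ is nonzero. If $\ell \equiv 1 \pmod 3$, the whole image lies in the upper unipotent subgroup and is non-trivial, so (B) of Theorem~\ref{thm:thm15} holds with $a = 1$; if $\ell \equiv 2 \pmod 3$, a short commutator computation shows that $\rhobar_{W,p}(\Frob_\ell) = \left(\begin{smallmatrix} 2 & \ast \\ 0 & 1 \end{smallmatrix}\right)$ does not commute with the non-trivial unipotent elements of $\rhobar_{W,p}(I_\ell)$, so the image is non-abelian and (A) holds. Hence a criterion always exists in case~(ii). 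To finish the proof, I will match case~(ii) with condition~(a) $e(E') \in \{3, 6\}$ by separating on whether the character $\chi$ of Proposition~\ref{P:theoryTate}(A) is unramified ($E$ multiplicative, forcing $e(E') = e(W') = 3$) or ramified ($E$ additively potentially multiplicative, where tensoring a cyclic order-$6$ inertia image of $\rhobar_{E',p}$ by the quadratic character $\chi_d$ shrinks it to order $3$, and conversely the order-$3$ case pulls back to $e(E') = 6$). I expect this last bookkeeping about the twist's effect on $\rhobar_{E', p}|_{I_\ell}$ to be the main obstacle; everything else is a direct application of Theorem~\ref{thm:thm15} together with Proposition~\ref{P:theoryTate}.
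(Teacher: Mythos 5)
Your approach is essentially the same as the paper's: reduce via a simultaneous quadratic twist (using \cite[Corollary~6]{symplectic}) to the two cases of Proposition~\ref{P:multTwist}, compute the image of $\rhobar_{W,p}$ from Proposition~\ref{P:theoryTate}, apply Theorem~\ref{thm:thm15}, and translate the condition $h(\Frob_\ell)\neq 0$ back to a $p$-th power condition on $\tilde{j}_E$. The differences are minor and mostly make the argument more self-contained: where the paper cites \cite[\S4.4]{symplectic} for the ``$(a)\Rightarrow$ criterion'' direction, you replace it with a direct $\ell\bmod 3$ case analysis (which is correct: $\ell\equiv1\pmod3$ gives a unipotent image, case~(B); $\ell\equiv2\pmod3$ gives $\rhobar(\Frob_\ell)$ with eigenvalues $2,1$ not commuting with the nontrivial unipotents in $\rhobar(I_\ell)$, case~(A)); and where the paper invokes~\cite[Theorem~3.1]{FKdegree} to pass between $h(\Frob_\ell)\neq 0$ and $\tilde{j}_E^{(\ell-1)/p}\not\equiv 1\pmod\ell$, you use Proposition~\ref{P:theoryTate}(B)(4), which is equivalent and arguably more transparent here. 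One small slip of notation: in case~(i) the relevant condition is $j_E^{1/p}\notin\Q_\ell$ (the extension $\Q_\ell(j_E^{1/p})/\Q_\ell$ is automatically unramified since $p\mid v(j_E)$), not $j_E^{1/p}\notin\Q_\ell^{\un}$; your subsequent translation to $\tilde j_E\bmod\ell$ being a non-$p$-th-power is right. The final ``bookkeeping'' you flag as a potential obstacle — that an unramified twist preserves the inertial order while a ramified quadratic twist swaps $e'=3\leftrightarrow e'=6$, so case~(ii) of Proposition~\ref{P:multTwist} corresponds exactly to $e(E')\in\{3,6\}$ — is precisely the step the paper leaves implicit when it writes ``otherwise (a) holds and we are done''; your sketch of it is correct and completes the matching.
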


\begin{proof}
If (a) holds then a symplectic criterion is described in \cite[\S 4.4]{symplectic}.
Suppose (b) holds.  Then $\chi_p = 1$ and
$\rhobar_{E,p} \simeq
 \left(\begin{smallmatrix} \chi & h \\ 0 & \chi \end{smallmatrix}\right)$ with $h \neq 0$ by Proposition~\ref{P:theoryTate} and \cite[Theorem~3.1]{FKdegree}. Hence case~(B) of \cite[Theorem 15]{symplectic} is satisfied thus a symplectic criterion exists.

The existence of a symplectic criterion is invariant by taking simultaneous quadratic twists (see~\cite[Corollary~6]{symplectic}).
Therefore, from Proposition~\ref{P:multTwist},
after twisting both curves by some~$d$ if needed,
we can assume that either (a) holds with $e(E')=3$ or
(b') $E'$ has good reduction, $E$ has split multiplicative reduction and
$v(\Delta_m(E)) = - v(j_E) \equiv 0 \pmod{p}$.

Suppose now that a symplectic criterion exists. We can assume (b') holds, otherwise (a) holds and we are done. From $E'[p] \simeq E[p]$ and Proposition~\ref{P:theoryTate} applied to~$E$ we obtain
\begin{equation} \label{E:tate1}
\rhobar_{E',p} \simeq \rhobar_{E,p} \simeq
 \begin{pmatrix} \chi_p & h \\ 0 & 1 \end{pmatrix} \quad \text{ with } \quad h|_{I_\ell} = 0.
\end{equation}
Since $E'$ has good reduction, the image of $\rhobar_{E',p}$ is cyclic, so
$\#\rhobar_{E',p}(G_{\Q_\ell}) = \#\rhobar_{E,p}(G_{\Q_\ell})$ divides~$p(p-1)$.
As a symplectic criterion exists, we are in case (B) of \cite[Theorem~15]{symplectic}, therefore $\#\rhobar_{E,p}(G_{\Q_\ell}) = p$.
Thus $\chi_p = 1$ and $h \neq 0$. In particular, $\ell \equiv 1 \pmod{p}$.
Finally, since $j_{E}$ is invariant by quadratic twists, we obtain ${\tilde{j}_E}^{\frac{\ell-1}{p}} \not\equiv 1 \pmod{\ell}$ from~\cite[Theorem~3.1]{FKdegree}, proving~(b).
\end{proof}

\section{Proof of Theorem~\ref{thm:multiplicative}}


Fix $\tilde{j}_i^{1/p} \in \Qbar_\ell$ a $p$-th root of $\tilde{j}_i \in \Z_\ell^\times$ and set
$j_{E_i}^{1/p} := \tilde{j}_i^{1/p} \ell^{\frac{v(j_E)}{p}}$.

Let $K / \Q_\ell$ be the unique unramified extension of degree~$p$.

From Proposition~\ref{P:theoryTate} and~\cite[Theorem~3.1]{FKdegree} it follows that, for $i=1,2$, the $p$-torsion field cut out by $\rhobar_{E_i,p}$ is unramified and of degree~$p$.
Then $\ker \rhobar_{E_1,p} = \ker \rhobar_{E_2,p}$
fixes~$K$.

Let $q_i$ be the Tate parameter of~$E_i$. Since $j_{E_i} \cdot q_i = s^p$ in $\Q_\ell$ by~\cite[Lemma 10.3]{FKdegree},
we have $K = \Q_\ell(q_i^{1/p}) = \Q_\ell(j_{E_i}^{1/p}) = \Q_\ell(\tilde{j}_i^{1/p})$ by Proposition~\ref{P:theoryTate}.
Let $\sigma := \Frob_\ell \in \Gal(K/\Q_\ell)$.

From Proposition~\ref{P:theoryTate} we have
$\rhobar_{E_i,p}(\sigma) = \left(\begin{smallmatrix}
  1 & -h_i \\
  0 & 1
 \end{smallmatrix}\right)$
in the symplectic basis $\{\zeta_p, q_i^{1/p}\} / q_i^\Z$,
where~$h_i$ is defined by $\sigma(j_{E_i}^{1/p}) = \zeta_p^{h_i} \cdot j_{E_i}^{1/p}$.
Since $\rhobar_{E_1,p}(\sigma) = g \cdot \rhobar_{E_2,p}(\sigma) \cdot g^{-1}$
for
$g = \left(\begin{smallmatrix}
  1 & 0 \\
  0 & h_2/h_1
\end{smallmatrix}\right)$, we conclude $E[p] \simeq E'[p]$ as $G_{\Q_\ell}$-modules; the second part of the first statement now follows from Theorem~\ref{thm:existenceTate}.
Moreover, from \cite[Lemma 6]{symplectic} we conclude that $E_1[p]$ and~$E_2[p]$ are symplectically isomorphic if and only if $h_1/h_2$ is a square mod~$p$.

Define $\tilde{h}_i$ by the relation
$\sigma(\tilde{j}_i^{1/p}) = \zeta_p^{\tilde{h}_i}\tilde{j}_i^{1/p}$.
We have
\[
\zeta_p^{h_i} \cdot j_{E_i}^{1/p} = \sigma(j_{E_i}^{1/p}) = \sigma(\tilde{j}_i^{1/p})\cdot\ell^\frac{v(j_E)}{p} = \zeta_p^{\tilde{h}_i} \cdot \tilde{j}_i^{1/p} \ell^\frac{v(j_E)}{p} =  \zeta_p^{\tilde{h}_i} \cdot j_{E_i}^{1/p}
\]
thus $h_i = \tilde{h}_i$ in $\F_p$. Since $K/\Q_\ell$ is unramified, $\ell \in \calO_K$ is a uniformizer. We have $\tilde{j}_i^{1/p} \in \calO_K^\times$ and,
by definition of $\sigma = \Frob_\ell$, we obtain
\[
 \sigma(\tilde{j}_i^{1/p}) \equiv \tilde{j}_i^{\ell/p} \pmod{\ell \calO_K}
 \implies
 \tilde{j}_i^{1/p} \zeta_p^{h_i} \equiv \tilde{j}_i^{ \ell/p} \pmod{\ell \calO_K}
\implies \zeta_p^{h_i} \equiv \tilde{j}_i^{ \frac{\ell-1}{p}} \pmod{\ell}
 \]
therefore $h_i = \text{Log}_{\zeta_p}(\tilde{j}_i^{\frac{\ell-1}{p}} \pmod{\ell})$ in $\F_\ell$, proving the second statement.


\section{Proof of Theorem~\ref{thm:mixedReduciton}}

We have $\chi_p = 1$. Therefore, from assumption (1), Proposition~\ref{P:theoryTate}
and \cite[Theorem 3.1]{FKdegree}, we can choose a  symplectic basis for $E[p]$ such that
\[
\rhobar_{E,p} (\Frob_\ell) =
 \begin{pmatrix} 1 & -h \\ 0 & 1 \end{pmatrix} \quad \text{ with } \quad h|_{I_\ell} = 0, \quad h = \text{Log}_{\zeta_p}(\tilde{j}_E^{\frac{\ell-1}{p}} \pmod{\ell}) \neq 0,
\]
where the formula for~$h$ follows as in the proof of Theorem~\ref{thm:multiplicative}.

Assumption (2) and \cite[Lemma 8]{symplectic} give
that $\rhobar_{E,p}(\Frob_\ell)= \left(\begin{smallmatrix}
  1 & 1 \\
  0 & 1
\end{smallmatrix}\right)$ in some basis $\{P_1,P_2\}$. If $e_E(P_1,P_2) = \zeta_p^{h'}$ and
$ h'  \cdot  k \equiv 1 \pmod{p}$ then $\{ k \cdot P_1, P_2\}$ is a symplectic basis
in which
\[
\rhobar_{E',p} (\Frob_\ell) =
 \begin{pmatrix} 1 & h' \\ 0 & 1 \end{pmatrix},
\]
proving~(i).
We have $\rhobar_{E,p}(\Frob_\ell) = g \cdot \rhobar_{E',p}(\Frob_\ell) \cdot g^{-1}$
for
$g = \left(\begin{smallmatrix}
  1 & 0 \\
  0 & -h'/h
\end{smallmatrix}\right)$.
Since $E'$ has good reduction, $\rhobar_{E',p}$ is also unramified. Therefore,
both representation fix the unique unramified extension of~$K / \Q_\ell$ of degree~$p$,
therefore $\ker \rhobar_{E_1,p} = \ker \rhobar_{E_2,p}$
hence $E[p]$ and $E'[p]$ are isomorphic $G_{\Q_\ell}$-modules; the second part of the first statement now follows from Theorem~\ref{thm:existence}.
Finally, conclusion (ii) follows from \cite[Lemma 6]{symplectic} since $(\det(g) / p) =1$ if and only if $-h/h'$ is a square~mod~$p$.

\section{The case of good reduction}
 \label{s:goodred}

The available symplectic criteria for the case where both curves have good reduction are Theorem~12~and Theorem~16 in \cite{symplectic}, but while the former is very easy to apply in practice, the latter is impractical. This is because we don't know how to determine the value of~$s$ in its statement, more precisely,
we do not know the symplectic type of the basis giving the matrix description~(7.2) in {\it loc. cit}. To circumvent this issue, Theorem~\ref{thm:goodred} is an alternative formulation of the criterion, and we provide a {\tt Magma} implementation of it as well. Compared to~\cite[Theorem~16]{symplectic} the disadvantage is that we cannot use~\cite[Theorem~2]{Centeleghe} to quickly compute the action of Frobenius, but the upside is that we know the symplectic type of all basis.

\begin{proof}[Proof of Theorem~\ref{thm:goodred}] From the assumption $p \mid \# \rhobar_{E,p}(\Frob_\ell)$, Corollary 4 and Proposition~5 in \cite{symplectic}, we know that a symplectic criterion exists; thus the last statement follows.
Since $E$ has good reduction, $\rhobar_{E,p}(G_{\Q_\ell}) \subset \GL_2(\F_p)$ is cyclic and generated by $\rhobar_{E,p}(\Frob_\ell)$, hence we are in case~(B) of Theorem~\ref{thm:thm15}. It follows that there are basis of $E[p]$ and $E'[p]$ where the action of $\Frob_\ell$
is given by~$\left(\begin{smallmatrix}
  a_\ell/2 & 1 \\
  0 & a_\ell/2
\end{smallmatrix}\right)$.
Replacing the first element of each basis by an appropriate multiple
(depending on the curve), we obtain symplectic basis and Frobenius matrices in the desired form, proving the first statement (see the proof of Theorem~\ref{thm:mixedReduciton} for details where this argument was applied to~$E'$). Finally, the matrix $g = \left(\begin{smallmatrix}
  1 &  0\\
  0 & h'/h
\end{smallmatrix}\right)$ has determinant $h'/h$ and
conjugates $\rhobar_{E',p}$ into $\rhobar_{E,p}$, so \cite[Lemma~6]{symplectic}
completes the proof.
\end{proof}

For large~$p$, applying Theorem~\ref{thm:goodred} in practice can become computationally expensive, because it involves finding the splitting field of the $p$-division polynomial which has degree $O(p^2)$. To make this more accessible, we have implemented an algorithm in {\tt Magma} that makes use of all the theorem assumptions to reduce the required calculations, which we now summarize.

First observe that to determine if $h'/h$ is a square modulo~$p$ we only need to know the Legendre symbols $(h/p)$ and~$(h'/p)$. In view of this, we will describe an algorithm that given an elliptic curve $E$ as in the statement of Theorem~\ref{thm:goodred} determines~$(h/p)$.
This method can then be applied to $E$ and $E'$ to determine if $h'/h$ is a square. Further, it can also be applied to find~$(h'/p)$,
where~$h'$ is as in Theorem~\ref{thm:mixedReduciton}, as we will do in the examples of Section~\ref{s:examples}.

Fix $\zeta_p \in \overline{\F}_\ell$ a $p$-th root of unity and denote also by $\zeta_p$ its lift in $\Qbar_\ell$.

Since $E/\Q_\ell$ has good reduction, the reduction morphism $E[p] \to \oE[p]$ is a Galois equivariant isomorphism that preserves the Weil pairing, where $\Ebar / \F_\ell$ is the residual curve of $E/\Q_\ell$. Therefore, given a symplectic basis of $\Ebar[p]$ we can lift it to a symplectic basis of $E[p]$ and, in those basis, we have the equality $\rhobar_{E,p}(\Frob_\ell) = \rhobar_{\oE,p}(\Frob_\ell)$ in $\GL_2(\F_p)$, where we are using $\Frob_\ell$ to denote both the Frobenius element in $G_{\F_\ell}$ and a lift in $G_{\Q_\ell}$.
Hence we can work with $\oE / \F_\ell$ and all the calculations required to find $(h/p)$ are over finite fields.

We see from the theorem that $\rhobar_{\Ebar,p}(G_{\F_\ell})$ can be conjugated into the Borel and the curve $\Ebar$ admits exactly one $p$-isogeny $\varphi$ over $\F_\ell$.

We determine the kernel of $\varphi$ by finding the unique degree $(p-1)/2$ factor of the $p$-division polynomial of $\oE$ and
construct the $p$-isogeny $\varphi:\oE \to W:=\oE/\ker(\varphi)$.
Take any non-trivial point $P_W \in \ker(\hat{\varphi})$, where $\hat{\varphi}$ is the dual isogeny of~$\varphi$, and let $Q$ be any point in the preimage of~$\varphi^{-1}(P_W)$ or minus any such point.
Let $P$ be any non-trivial point in $\ker \varphi$. A short argument, shows that $Q$ has order $p$ and it is linearly independent to $P$.
Thus $\{P,Q\}$ form a basis for $\oE[p]$. This approach involves finding roots of polynomials of degree $O(p)$ over the field of definition of~$P$ to find the $x$-coordinate of~$Q$.

Using the Weil pairing, we check whether $\{P, Q\}$ is a symplectic basis (up to multiplying both $P,Q$ by a scalar); if it is not, then replace $Q$ by $\epsilon Q$ where $\epsilon$ is a non-square modulo $p$, so that the resulting basis $B_{\oE} = \{P, Q\}$ satisfies that is $e_{\oE}(P,Q) = \zeta_p^{k^2}$.

By our assumptions $P^{\Frob_\ell} = \frac{a_\ell}{2} \cdot P$ and $\Tr(\Frob_\ell) = a_\ell$.
Thus, in the basis $B_{\oE}$, we have
\[
\rhobar_{\oE,p}(\Frob_\ell)=\left(\begin{smallmatrix}
				 a_\ell/2 & h \\
				0 & a_\ell/2\end{smallmatrix}\right)
\]
for some $h \ne 0$ as predicted by Theorem~\ref{thm:goodred}. Next, we have
\begin{equation}
\label{eq:WP}
\zeta_p^\alpha = e_{\oE}( Q^{\Frob_\ell },Q)= e_{\oE}(h P + (a_\ell/2)Q, Q)= e_{\oE}(P,Q)^h \cdot  e_{\oE}(Q,Q)^{a_\ell/2}  = \zeta_p^{k^2h},
\end{equation}

where we used bilinearity and non-degeneracy of the Weil pairing,
therefore
\begin{equation}
 \label{eq:WP2}
(h / p) = (k^2h/p) = (\alpha / p).
\end{equation}

\begin{example}\label{ex:211}
Let $\ell=73$ and $p=211$. Consider the elliptic curves over $\F_\ell$
\[E_1 : y^2=x^3 - 3x + 6\quad \text{ and } \quad E_2 : y^2=x^3+55x+5.\]
Using {\tt Magma} implementation of \cite[Theorem 2]{Centeleghe} we quickly check that the action of Frobenius on $E_i[p]$ is given by
$\left(\begin{smallmatrix}
				 110 & 0 \\
				1 & 110\end{smallmatrix}\right)$
hence it has the correct shape for Theorem~\ref{thm:goodred}. The issue is that we don't know the symplectic type of the basis giving this matrix representation.

The $p$-division polynomial of $E_1$ factors over $\F_\ell$ into the product of 5 irreducible polynomials of degree~$21$ and 5 of degree~$4431$.
The point~$P$ lies in $\F_{\ell^{42}}$ and $Q$ in $\F_{\ell^{8862}}$, the latter field being the $p$-torsion field of $E_1$ (and hence of~$E_2$).
Checking if $\{P,Q\}$ is a symplectic basis (up to a scalar) is quick and we replace $Q$ by $-Q$ if it is not, since $(-1/p)=-1$ as $p\equiv 3 \pmod{4}$.
Finally, we compute $(h_1/p) = 1$ following~\eqref{eq:WP} and~\eqref{eq:WP2}. Applying the same method to~$E_2$ yields $(h_2/p) = 1$, therefore $E_1[p]$ and~$E_2[p]$ are symplectically isomorphic by Theorem~\ref{thm:goodred}.

Note that the Weil pairing computations with $E_1$ and~$E_2$ need to be done with respect to the same fixed $\zeta_p \in \overline{\F}_\ell$. To achieve this we first construct the field $\F_\ell(\zeta_p)$ and construct all the fields in the computation above as extensions of it, so that the final Weil pairing computations automatically comes in terms of~$\zeta_p$.
The computing time for this example is approximately 2.5 hours; we remark that naively searching for roots of the $p$-division polynomial of~$E_1$ had run for two weeks without progress.
\end{example}

\begin{remark}\label{rm:Magma}
The main difficulty to apply Theorem~\ref{thm:goodred} in practice is defining the $p$-torsion points when~$p$ is large, since they lie in an extension of
$\F_\ell$ of degree up to~$p^2-p$. The \texttt{Magma} software usually constructs finite fields via Conway polynomials, which is slow for large finite fields, for example, the field $\F_{73^{8862}}$ involved in Example~\ref{ex:211} was taking about~8h to initiate; moreover, finding roots of the $p$-division polynomial in such a large field was impractical. Instead, we can define the relevant finite field extensions as relative extensions using irreducible factors of the $p$-division polynomial; this has two significant advantages: it initiates the field faster and automatically obtains the $x$-coordinate of~$Q$ as its primitive element; see~\cite{git} for full implementation details.
\end{remark}

\section{Proof of Theorem~\ref{thm:summary}}

Let $(E,E',p)$ be as in the statement.

We have $(E/\Q_\ell)[p] \simeq (E'/\Q_\ell)[p]$ as $G_{\Q_\ell}$-modules for all primes~$\ell$.
Moreover, if any of the theorems under {\sc Criteria} in Table~\ref{Table:CriteriaList} applies then its output coincides with the symplectic type of~$(E,E',p)$ by \cite[Corollary 2]{symplectic}.

Suppose $E/\Q_\ell$ and $E'/\Q_\ell$ satisfy a row of Table~\ref{Table:CriteriaList}. Then the theorem under {\sc Criteria} in that row applies; in particular, a symplectic criterion exists, so $\ell \in \cL_{(E,E',p)}$. Recall that the existence of a symplectic criterion is invariant
by simultaneous quadratic twist. Thus, if $E/\Q_\ell$ and~$E'/\Q_\ell$ satisfy a row in Table~\ref{Table:CriteriaList} after a twist prescribed by Table~\ref{Table:TwistingLemmas},
then $\ell \in \cL_{(E,E',p)}$.

Suppose now that $\ell \in \cL_{(E,E',p)}$, that is, a symplectic criterion exists at~$\ell$.

Note that, for each prime~$\ell$, the curves  $E/\Q_\ell$ and $E'/\Q_\ell$ satisfy one of the following :
\begin{itemize}
 \item[(i)] both have potentially multiplicative reduction; or
 \item[(ii)] $E/\Q_\ell$ has potentially multiplicative reduction and $E'/\Q_\ell$ has potentially good reduction; or
 \item[(iii)] both curves have potentially good reduction.
\end{itemize}

Observe that applying a simultaneous quadratic twist by some~$d \in \Q_\ell$ obtained by applying one of the twisting lemmas in Table~\ref{Table:TwistingLemmas} to (say)~$E$ changes the reduction type of $E'$ in the unique way that
will preserve the isomorphism $(E/\Q_\ell)[p] \simeq (E'/\Q_\ell)[p]$ after twist.

Assume (i). Applying a common quadratic twist prescribed in \cite[Lemma 5]{symplectic} we can assume both
$E/\Q_\ell$ and $E'/\Q_\ell$ have split multiplicative reduction. Now, from Theorem~\ref{thm:existenceTate} we see that either row 11 or 12 in Table~\ref{Table:CriteriaList} is satisfied.

Assume (ii). Then either (a) or (b) in Theorem~\ref{thm:existence} is satisfied. If (a) holds with $e'=6$,
by taking a quadratic twist prescribed by any of \cite[Lemmas~1, 2 or 5]{symplectic}, we can assume that
$E/\Q_\ell$ has multiplicative reduction and $E'/\Q_\ell$ has $e'=3$; if (a) holds with $e'=3$ then~$E/\Q_\ell$ already has multiplicative reduction. In both cases row 13 of Table~\ref{Table:CriteriaList} applies. If (b) holds, after the twist given by \cite[Lemmas~5]{symplectic}, we can assume that $E/\Q_\ell$ has split multiplicative reduction and $E'/\Q_\ell$ good reduction, hence row 14 applies.

Assume (iii). This follows from case 3 in the proof of \cite[Theorem~19]{symplectic}.\qed

\begin{small}
 \begin{table}[htb]
\renewcommand{\arraystretch}{1.25}
\begin{tabular}{|c|c|c|c|} \hline
{\sc reduction type of} $E$, $E'$ & {\sc prime} $\ell \neq p \geq 3$& {\sc Extra conditions} & {\sc criteria}  \\\hline
good ($e=1$)  & $(\ell/p) = 1$       &  $p \mid \Delta_\ell, \; p \nmid \beta_\ell, \; \Ebar = \Ebar'$ & Theorem~12   \\
good ($e=1$)  & $(\ell/p) = 1$       &  $p \mid \Delta_\ell, \; p \nmid \beta_\ell$ & Theorems~\ref{thm:goodred}, 16   \\
pot. good $e = 3$ &$\ell \equiv 2 \pmod{3}$ &      & Theorems~1,2   \\
pot. good $e = 3$ &$\ell \equiv 1 \pmod{3}$ &   $p=3$   & Theorem~3  \\
pot. good $e = 3$& $\ell = 3$               & $\tilde{\Delta} \equiv 2 \pmod{3}$ & Theorem~4   \\
pot. good $e=4$         &$\ell \equiv 3 \pmod{4}$ &                                     & Theorem~5   \\
pot. good $e=4$         &$\ell = 2$               &  $\tilde{c}_4 \equiv 5\tilde{\Delta} \pmod{8}$& Theorem~6  \\
pot. good $e=8$         &$\ell = 2$               &    & Theorems~8,9   \\
pot. good $e=12$        &$\ell = 3$               &     & Theorems~10,~11   \\
pot. good $e=24$        &$\ell = 2$               &    & Theorem~7   \\ \hline
multiplicative &   any             &  $p \nmid v(j_E)$    & Theorem~13   \\
split multiplicative &   $\ell \equiv 1 \pmod{p}$             &  $p \mid v(j_E), \; \tilde{j}^{\frac{\ell-1}{p}} \not\equiv 1 \pmod{\ell}$    & Theorem~\ref{thm:multiplicative}   \\ \hline
$E$ multiplicative, $e'=3$ & $\ell \neq 3$               &  $p=3$ & Theorem~14  \\
\multirow{2}{*}{$E$ split multiplicative, $E'$ good} & \multirow{2}{*}{any} & $p \mid v(j_E)$, \; $\tilde{j}^{\frac{\ell-1}{p}} \not\equiv 1 \pmod{\ell}$ & \multirow{2}{*}{Theorem~\ref{thm:mixedReduciton}} \\
                                                     &                      & $p\mid \#\rhobar_{E',p}(\Frob_\ell)$                                        & \\
\hline
\end{tabular}
\caption{The theorems under {\sc criteria} are all in~\cite{symplectic} except for Theorem~\ref{thm:multiplicative},~\ref{thm:mixedReduciton} and~\ref{thm:goodred}.
We assume $(E/\Q_\ell)[p] \simeq (E'/\Q_\ell)[p]$ as $\Gal(\Qbar_\ell/ \Q_\ell)$-modules;
$e$ and $e'$ are the semistability defects of $E$ and $E'$; the quantities
$\Delta_\ell$, $\beta_\ell$, $\tilde{c}_4$, $\tilde{\Delta}$ and $\tilde{j}$
are attached to~$E$; see \cite[\S4]{symplectic} for all definitions.}
\label{Table:CriteriaList}
\end{table}
\end{small}

\begin{small}
 \begin{table}[htb]
 \renewcommand{\arraystretch}{1.25}
\begin{tabular}{|c|c|c|c|} \hline
\multicolumn{3}{|c|}{{\sc Twisted reduction types for $E$, $E'$}} & {\sc Twisting Lemmas in \cite{symplectic}} \\ \hline
\multicolumn{3}{|c|}{$e=e'=2$ (twist of good)}  & Lemmas~3,~4 \\
\multicolumn{3}{|c|}{$e=e'=6$ (twist of $e=e'=3$)} & Lemmas~1,~2  \\
\multicolumn{3}{|c|}{$E$, $E'$ pot. multiplicative} & Lemma~5  \\
\multicolumn{3}{|c|}{$E$ pot. multiplicative, $e'=6$}  & Lemmas~1,~2,~5  \\
\multicolumn{3}{|c|}{$E$ pot. multiplicative, $e'=2$}  & Lemma~5  \\
\hline
\end{tabular}
\caption{Description of the twists required to reduce every possible combination of reduction types to one of the cases in Table~\ref{Table:CriteriaList}.}
\label{Table:TwistingLemmas}
\end{table}
\end{small}

\section{Applications}
\label{s:examples}

In~\cite{CremonaFreitas}, the first author and John Cremona, determined the symplectic type of all triples $(E,E',p)$ where $p \in \{7,11,13,17\}$ and $E, E'$ are elliptic curves over~$\Q$ of conductor $\leq 500000$.

In this section, we will first extend their search by finding all triples $(E,E',5)$ in the same conductor range. We note that, if $(E_1,E_2,5)$ is in our list, then its quadratic twist
$(dE_1,dE_2,5)$ is also in the list when both $dE_i$ have conductor $\leq 500000$; moreover, triples obtained from~ $(E_1,E_2,5)$ by
replacing $E_1$ or $E_2$ by an isogenous curve are also included;
only triples where $E_1$ and $E_2$ are isogenous were not included.
Since the lists from~{\it loc. cit.} contain only one representative among triples related by quadratic twist, to uniformize them with our list for $p=5$, we also detirmine the expanded lists for $p \geq 7$.
Secondly, we find the sublists of triples that satisfy Theorem~\ref{thm:multiplicative} or Theorem~\ref{thm:mixedReduciton}
at some prime~$\ell$ and determine their symplectic type using our criteria;
we did not search for when Theorem~\ref{thm:goodred} applies as there are infinitely many primes of good reduction.

All the lists together with code to apply the criteria can be found in~\cite{git};
we also provide the code to search for congruences in view of future expansions of the LMFDB database.

\subsection{Finding congruences mod~$p$}
To search for mod $p \geq 5$ congruences in the LMFDB database, we follow a variation of the strategy used in ~\cite[\S 3]{CremonaFreitas} for $p \geq 7$. In particular, we need to accommodate for the fact that an  elliptic curve over~$\Q$ can be 5-isogenous over~$\Q$ to two different curves, and we do not discard triples related by quadratic twist.

We summarize the main steps of our calculations:

1) Partition the set of isogeny classes of elliptic curves in the LMFDB into subsets $S$, such that whenever two curves have mod~$p$ representations with isomorphic semisimplifications, their isogeny classes belong to the same subset $S$, but not necessarily conversely. Indeed, we group the isogeny classes by their traces of Frobenius in the first 50 primes $> 500 000$ (this guarantees all curves have good reduction at these primes); it is conceivable that 2 isogeny classes could agree at these 50 primes and disagree in another prime, but it turns out this does not occur, otherwise it would be detected in a later step. We ignore all subsets of size~1.

2) Separate the subsets resulting from 1) into those corresponding to irreducible representations from corresponding to reducible representations; the latter occur only for $p=5,7$.

3) For each~$S$ corresponding to irreducible representations, and each isogeny class in $S$, which is represented by a single curve $E$ in~$S$, saturate the set $S$ by adding to~$S$ all the curves in the isogeny class of~$E$. For each~$S$, pick all the (non-ordered) pairs $(E,E')$ of non-isogenous curves in~$S$.
The next step will show this is precisely the set of all irreducible mod~$p$ congruences we are looking for.

4) For each pair $(E,E')$ computed in 3) we prove that $\rhobar_{E,p} \simeq \rhobar_{E',p}$ by comparing traces of Frobenius up to the `Sturm' bound given by~\cite[Proposition~4]{KO}.
First we minimize the set of pairs we are required to do this computation: (i) we only need to consider pairs up to quadratic twist, i.e., we pick one pair among all the pairs of the form $(dE,dE')$; and (ii) since we are comparing irreducible mod~$p$ representations, isogenous curves are related by isogenies of degree coprime to~$p$ so, among the pairs obtained in (i), we only need to consider one pair $(E,E')$ from all the pairs where $E$ and $E'$ belong to fixed isogeny classes.
If we find that $\rhobar_{E,p} \not\simeq \rhobar_{E',p}$ for some pair in~$S$, then we can further partition~$S$, and discard the resulting subsets of size~1; this did not occur, hence $\rhobar_{E,p} \simeq \rhobar_{E',p}$ for all pairs computed in the previous step.

\begin{remark}\label{rem:mod5}
The above computation took about 11 hours for all pairs with $p\geq 7$ and we estimate more than 2 months for~$p=5$ due to the 86530 pairs modulo~$5$ some of which giving rise to very large bounds; in fact, there are 30 pairs whose bound is around~$6 \cdot 10^9$ and there is no simple way in {\tt Magma} to generate the list of primes up to this number. Alternatively, we can use  the following much faster argument. Given an irreducible pair $(E,E')$ satisfying $(j(E),j(E')) \neq (0,0),(1728,1728)$ for which we want to prove $\rhobar_{E,5} \simeq \rhobar_{E',5}$, we consider the parametrization~$E_t^+$ of all elliptic curves with 5-torsion symplectically isomorphic to $E[5]$ given in~\cite{RS95} and the parametrization~$E_t^-$ of the elliptic curve with 5-torsion anti-symplectically isomorphic
to $E[5]$ from~\cite[Theorem 5.8]{fisherp5}. Let $j^\pm(t)$ be the $j$-invariant of $E_t^\pm$ and compute the rational roots of the numerators
of~$j^\pm(t)-j(E')$; if $E[5] \simeq E'[5]$ then one of the roots obtained must be the value $t'$ giving rise to $E'$ in one of the families $E_t^\pm$.
We checked that, for all mod 5 irreducible pairs computed in the previous step, we always find such a~$t'$ establishing simultaneously the isomorphism $E[5]\simeq E'[5]$ and its symplectic type; this takes about 6.5 hours. Finally, for each pair with $(j(E),j(E')) = (0,0)$ or~$(1728,1728)$ we conclude that $E[5]\simeq E'[5]$ by applying Theorem~1.2 and Corollary~2.5 in~\cite{CremonaFreitas}.
\end{remark}

By the end of 4), we determined the irreducible triples~$(E,E',p)$,  obtaining 396516 for~$p=5$, 39386 for~$p=7$, 848 for~$p=11$, 227 for~$p=13$ and 8 for $p=17$.

The reducible triples occur for $p=5,7$ and finding them requires additional steps.

5) For all $E$ in a reducible subset~$S$, we have
$\rhobar_{E,p} \simeq \left(\begin{smallmatrix}
  \chi & * \\
  0 & \chi'
\end{smallmatrix}\right)$,
hence $\rhobar_{E,p}(G_\Q)$ satisfies case (A) of Theorem~\ref{thm:thm15} if and only if $* \neq 0$ and $\chi \neq \chi'$ and case (B) if and only if $* \neq 0$ and $\chi = \chi' $; since $\det \; \rhobar_{E,p}$ is the mod~$p$ cyclotomic character which is not a square of a character of~$G_\Q$, the latter case cannot occur. Observe that for $p \geq 7$ we always have $* \neq 0$
by Corollary~3 and Proposition~2 in~\cite{symplectic} but for $p=5$ we may have $*=0$.
For each $E \in S$,  we include in $S$ all curves $E'$ in its isogeny class satisfying $\rhobar_{E',p} \simeq \left(\begin{smallmatrix}
  \chi & * \\
  0 & \chi'
\end{smallmatrix}\right)$
with $* \neq 0$; if the initial $E$ has $* = 0$ we remove it from the list.

6) The sets $S$ obtained in 5) satisfy that, for all $E_1, E_2 \in S$, assumption (ii) of \cite[Theorem~3.6]{CremonaFreitas}
is satisfied and assumption~(i) likely holds; for $p=7$
we prove that it actually holds applying \cite[Proposition~4]{KO} as in 4) but for $p=5$ this is very time consuming, as there are around 5500 pairs including variou with bound $>10^{10}$. The following argument replaces this computation. First, we pick
any curve $E \in S$ and compute the field~$F$ fixed by its isogeny character~$\chi$ and the field~$F'$ fixed by the isogeny character~$\chi'$ of the unique elliptic curve $5$-isogenous to~$E$ (which is in~$S$), and check that $F \neq F'$.
Second, for each curve in $S$ we test whether the field fixed by its isogeny character is isomorphic to $F$ or~$F'$ and group
them into sets $S(F)$ and $S(F')$, respectively; it is possible that some curves would yield a field not isomorphic to $F$ neither~$F'$ but this does not occur, hence we divided~$S$ into the union of the disjoint sets $S(F)$ and~$S(F')$.
Now the following lemma gives $S(F) = S(\chi)$ and $S(F') = S(\chi')$, where
$S(\chi)$ and $S(\chi')$ are the subsets of $S$ of curves having isogeny character $\chi$ and~$\chi'$, respectively. Thus $\rhobar_{E_1,p}^{ss} \simeq \rhobar_{E_2,p}^{ss}$ for all $E_1, E_2 \in S$.

\begin{lemma} Let $E/\Q$ and~$C/\Q$ be elliptic curves such that
$\rhobar_{E,5} \simeq \left(\begin{smallmatrix}
  \chi_E & * \\
  0 & \chi_E'
\end{smallmatrix}\right)$
and $\rhobar_{C,5} \simeq \left(\begin{smallmatrix}
  \chi_C & * \\
  0 & \chi_C'
\end{smallmatrix}\right)$
both with $* \neq 0$.
Suppose that $\chi_E$ and $\chi_C$ fix the field $F$ and that
$\chi_E'$ and $\chi_C'$ fix the field~$F' \neq F$.
Then $\chi_E = \chi_C$ and $\chi_E' = \chi_C'$.
\end{lemma}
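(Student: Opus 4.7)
The plan is to leverage the determinant constraint $\det\rhobar_{E,5} = \det\rhobar_{C,5} = \chi_5$ together with a small case analysis on $n := [F:\Q]$. Since $\chi_E$ and $\chi_C$ have the same fixed field $F$, they share their kernel in $G_\Q$, so $\chi_C = \chi_E^a$ for some integer $a$ coprime to $n = \ord(\chi_E)$. As both characters take values in $\F_5^\times \cong \Z/4\Z$, we have $n \in \{1,2,4\}$.

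When $n \in \{1,2\}$, the group $(\Z/n\Z)^\times$ is trivial, so $\chi_C = \chi_E$ is automatic and the relation $\chi_E\chi_E' = \chi_C\chi_C' = \chi_5$ immediately yields $\chi_E' = \chi_C'$. When $n = 4$, the only alternative to $\chi_C = \chi_E$ is $\chi_C = \chi_E^{-1}$, which I will rule out by contradiction.

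Assume $\chi_C = \chi_E^{-1}$ with $n=4$. The determinant identity produces $\chi_E' = \chi_5\chi_E^{-1}$ and $\chi_C' = \chi_5\chi_E$, so their product is $\chi_E'\chi_C' = \chi_5^2$, which is the non-trivial quadratic character fixing $\Q(\sqrt{5})$. On the other hand, $\chi_E'$ and $\chi_C'$ share the fixed field $F'$, hence $\chi_C' = (\chi_E')^b$ for some $b$ coprime to $m := \ord(\chi_E') \in \{1,2,4\}$. A short subcase analysis on~$m$ yields a contradiction in each case: if $m=1$ then $\chi_E = \chi_5$ but $\chi_C' = \chi_5^2$ has fixed field $\Q(\sqrt{5}) \neq \Q = F'$; if $m=2$ then uniqueness of the non-trivial character of order~$2$ with fixed field~$F'$ forces $\chi_C' = \chi_E'$, hence $\chi_C = \chi_E$, contradicting $\ord(\chi_E) = 4$; if $m=4$ then either $\chi_C' = \chi_E'$ (same contradiction) or $\chi_C' = (\chi_E')^{-1}$, in which case $\chi_E'\chi_C' = 1$, incompatible with $\chi_E'\chi_C' = \chi_5^2 \neq 1$.

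The main obstacle is purely organizational bookkeeping across these subcases; the substantive content reduces to the single observation that $\det\rhobar_{E,5} = \chi_5$ has order~$4$, which is precisely what breaks the potential symmetry between the unordered pairs $\{\chi_E, \chi_E'\}$ and $\{\chi_C, \chi_C'\}$ once the two fixed fields $F, F'$ are prescribed.
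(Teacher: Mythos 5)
Your proof is correct and uses the same essential ingredients as the paper's: the determinant constraint $\chi_E\chi_E' = \chi_C\chi_C' = \chi_5$, the observation that equal fixed fields force $\chi_C$ to be a power of $\chi_E$ (coprime exponent) and likewise for $\chi_C'$ vs.~$\chi_E'$, and a finite case analysis on the orders of these characters in $\F_5^\times$. The only organizational difference is that the paper normalizes by passing to the $5$-isogenous curves so that $\chi_E'$ can be assumed to have order $4$ and then cases on $\ord(\chi_E)$, whereas you case on $\ord(\chi_E)$ first and, in the remaining order-$4$ situation, sub-case on $\ord(\chi_E')$; this avoids the isogeny trick at the cost of one extra layer of bookkeeping, but the content is identical.
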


\begin{proof} Since $\chi_E$ and~$\chi_E'$ are generators of the dual groups of $\Gal(F/\Q)$ and $\Gal(F'/\Q)$, respectively, we must have
$\chi_C = \chi_E^i$ and $\chi_C' = (\chi_E')^j$ with $i$ and~$j$ coprime to the order of~$\chi_E$ and~$\chi_E'$ respectively. The orders of $\chi_E$ and~$\chi_E'$ divide~$5-1=4$ so we can take $1 \leq i,j \leq 4$.

Since $\det \; \rhobar_{E,5}  =\det \; \rhobar_{C,5} = \chi_5$, we have
$\chi_E \chi_E' = \chi_5$ and $\chi_C \chi_C' = \chi_E^i (\chi_E')^j = \chi_5$.
In particular, if one of $\chi_E$,~$\chi_E'$ has order $< 4$ then the other must be of order 4; so, after replacing $E$ and~$C$ with their unique 5-isogenous curves if needed, we can assume that $\chi_E'$ is of order~4, and hence $j \in \{1,3\}$. We now divide into cases:

If $\chi_E = 1$ then $\chi_C = 1 = \chi_E$ and~$(\chi_E')^j = \chi_5^j = \chi_5$, hence $j=1$ and $\chi_E' = \chi_C'$, as desired.

If $\chi_E$ has order 2, then $i=1$ and $\chi_E \chi_E' (\chi_E')^{j-1} = \chi_5$, therefore $(\chi_E')^{j-1}=1$, so $j=1$ as desired.

If $\chi_E$ has order 4 then $i \in \{1,3 \}$. If $j=i$ then
$(\chi_E \chi_E')^j = \chi_5^j = \chi_5$ hence $j=1=i$ as desired. Suppose $i \neq j$. If $i=1$ and $j=3$, then
$\chi_5 (\chi_E')^2 = \chi_5$ and $\chi_E'$ has order dividing 2, a contradiction; if $i=3$ and $j=1$, then $\chi_E^2 \chi_5 = \chi_5$ so
$\chi_E$ has order dividing 2, and we are in one of the previous cases.
\end{proof}

7) From 6) we conclude that assumption all assumptions of \cite[Theorem~3.6]{CremonaFreitas} are satisfied for all $E_1, E_2 \in S$. Now, for each pair of non-isogenous curves $E_1, E_2 \in S(\chi)$, we compute the fields $F_1, F_2$ as in \cite[Theorem~3.6]{CremonaFreitas} and check whether they are isomorphic in which case we found a triple $(E_1,E_2,p)$; we do the same for $S(\chi')$.

\begin{remark}
For $p=5$ we could also use the alternative argument described in Remark~\ref{rem:mod5} instead of steps 6) and 7). Nevertheless, it is still important to work with curves with $*\neq 0$ as selected in 5), because otherwise we could find roots of the numerators
of~$j^\pm(t)-j(E')$ giving rise to $E'$ in both parametrizations, that is, the symplectic type of the isomorphisms $E[5] \simeq E'[5]$ is not uniquely defined.
\end{remark}

By the end of 7), we have determined all the reducible triples~$(E,E',p)$,  obtaining 22902 for $p=5$, 626 for $p=7$ and none for $p > 7$.

\subsection{Examples}
We finish with a few illustrative examples of application of our criteria.

From the lists of triples computed above, we selected the subset of triples satisfying the hypothesis of Theorem~\ref{thm:multiplicative} or Theorem~\ref{thm:mixedReduciton} at some prime~$\ell$.
We found zero triples for $p = 13, 17$, four triples for $p=11$, 1032 triples for $p=7$
and 31168 for $p=5$, all irreducible.

Moreover, we implemented in {\tt Magma}~\cite{magma} our criteria and applied them to determine the symplectic type of these triples; the symplectic types obtained are compatible with those previously computed using the methods from~\cite{CremonaFreitas,symplectic}, as expected.

All curves in the examples below are given by their Cremona labels.

\begin{example}
Up to quadratic twist, there are three irreducible congruences mod~$p=11$ satisfying our criteria. Namely, we have symplectic congruences between the curves
$$(E,E') = (2093b1, 26611d1), \; (497a1, 148603e1), \; (266526w1, 360594s1),$$
where the first pair of curves satisfies
Theorem~\ref{thm:mixedReduciton} at $\ell = 89$ and the last two pairs satisfy the same theorem at $\ell = 23$; in particular, there are no reducible triples satisfying either criteria, no triple satisfies Theorem~\ref{thm:multiplicative}, and no anti-symplectic congruences among the above.
\end{example}

\begin{example}
For $p=7$ no triples satisfy Theorem~\ref{thm:multiplicative}, hence all the 1032 satisfy Theorem~\ref{thm:mixedReduciton}, of which 743 are symplectic congruences and 289 anti-symplectic. Furthermore, there are 9 pairs of curves satisfying Theorem~\ref{thm:mixedReduciton} at two different values of~$\ell$, for example, the curves
$(85608s1,333576e1)$
satisfy Theorem~\ref{thm:mixedReduciton} at $\ell \in \{29,113\}$.
\end{example}

\begin{example}
For $p=5$ we found 150 triples satisfying Theorem~\ref{thm:multiplicative} among which 77 are symplectic and 73 anti-symplectic; we found also 31020 triples
satisfying Theorem~\ref{thm:mixedReduciton} of which 14936 are symplectic and 16084 anti-symplectic.
Note that there are 2 pairs that satisfy both theorems, for example, the pair $(66880bk1,414656bc1)$ satisfies Theorem~\ref{thm:multiplicative} at $\ell = 11$ and Theorem~\ref{thm:mixedReduciton} at $\ell = 31$. Moreover,
there are 592 pairs satisfy Theorem~\ref{thm:mixedReduciton}
at two different~$\ell$ and the pair $(291555a1,490770cw1)$ satisfies Theorem~\ref{thm:mixedReduciton} for all $\ell \in \{11,31,41\}$.
\end{example}

\begin{example} For $p=5$ there also 122 triples that satisfy Theorem~\ref{thm:multiplicative} up to unramified quadratic twist, and for $p \geq 7$ we found no such triple.

We give the details for $(E,E',5) = (227370t1, 227370v1,5)$.

Let $N = 2^4 \cdot 3 \cdot 5 \cdot 11 \cdot 13 \cdot 53$ and consider the curves
$E_1$ and~$E_2$ both of conductor~$N$
given by
\[
 E_1 \; : \; y^2 = x^3 + x^2 - 17930962498800x - 28929993729305827500
\]
and
\[
 E_2 \; : \; y^2 = x^3 + x^2 - 7090720x + 7507005428.
\]
They have minimal discriminants
\[
 \Delta_m(E_1) = 2^{13} 3^{13} 5^7 11^5 13^{13} 53^3 \quad \text{ and } \quad \Delta_m(E_2) = 2^{35} 3^4 5^1 11^5 13^1 53^1
\]
and $j$-invariants
\[
 j(E_1) = \frac{25463^3  \cdot 2112590327^3}{2 \cdot 3^{13} 5^7 11^5 13^{13} 53^3} \quad \text{ and } \quad j(E_2) = - \frac{2293^3 \cdot 9277^3}{2^{23} 3^4 5^1 11^5 13^1 53^1}.
\]
Since their conductor is $> 500 000$ these curves are currently not in the LMFDB, but can be obtained by taking quadratic twist by $-1$ of the curves $E$ and~$E'$, respectively. A consultation of the LMFDB shows that the mod~$5$ representations of~$E$ and~$E'$ are irreducible; moreover, from \cite[Proposition~4]{KO}, comparing traces of Frobenius at all primes
$q < 108864$  not dividing~$N$ shows that the $G_\Q$-modules $E[5]$ and $E'[5]$ are isomorphic, therefore the symplectic type of $(E_1,E_2,5)$ is well defined. We will now determine its symplectic type using Theorem~\ref{thm:multiplicative}. Indeed, both $E_1$ and $E_2$ have split multiplicative reduction at $\ell = 11$ and satisfy
\[
 v_{11}(j(E_1)) = v_{11}(j(E_2)) = -5 \equiv 0 \pmod{5}, \quad \tilde{j}_{E_1} \equiv 3 \pmod{11}
 \quad \text{ and } \quad \tilde{j}_{E_2} \equiv 5 \pmod{11}.
 \]
Moreover, $3$ is a $5th$ root of unity in $\F_{11}$, and computing logarithms
in $\mu_5 \subset \F_{11}^\times$ yields
\[
 -h_1 = \text{Log}_3(3) = 1 \quad \text{ and } \quad -h_2= \text{Log}_3(5) = 3,
\]
hence $h_1/ h_2 \equiv 2 \pmod{5}$ is not a square mod~$5$ thus the type of $(E_1,E_2,5)$ is anti-symplectic by Theorem~\ref{thm:multiplicative}. As a sanity check, we can apply the original multiplicative criterion at $\ell = 53$; indeed,
$v_{53}(\Delta_m(E_i)) \not\equiv 0 \pmod{5}$ and $v(\Delta_m(E_1)/v(\Delta_m(E_2)) \equiv 3 \pmod{5}$ is not a square mod~$5$, therefore the symplectic type of $(E_1,E_2,5)$ is anti-symplectic by \cite[Proposition 2]{KO}.
\end{example}

\begin{example} We give details of an application of Theorem~\ref{thm:mixedReduciton}.
Consider the curves with Cremona labels
 $E = 103540a1$ and $E' = 3340a1$. A quick consultation of the LMFDB shows that their mod~$5$ representations are irreducible and that
$a_{31}(f_{E})a_{31}(E') \equiv (31 + 1) \pmod{5}$ where $f_{E}$ is the modular form corresponding to~$E$ via modularity. Therefore, from \cite[Proposition~4]{KO}, comparing traces of Frobenius at all primes
$q < 32256$, $q \neq 31$ and ~$q \nmid 103540$ shows that $E[5] \simeq E'[5]$ as $G_\Q$-modules, and so the symplectic type of $(E,E',5)$ is well defined. We will now determine it using Theorem~\ref{thm:mixedReduciton}. Indeed, the curve $E$ has split multiplicative reduction at $31$; moreover, $2$ is a $5th$ root of unity in $\F_{31}$ and we have
\[
 v_{31}(j(E)) = -5 \equiv 0 \pmod{5}, \quad \tilde{j}_{E} \equiv 16 \pmod{31},
 \quad -h = \text{Log}_2(16) = 4,
\]
where we computed the logarithms
in $\mu_5 \subset \F_{31}^\times$.
The curve $E'$ has good reduction at~$\ell = 31$; to compute $h'$ we use the fact that the reduction morphism at primes of good reduction induces a symplectic Galois equivariant isomorphism. Indeed, $(E'/\Q_{31})[5] \simeq (E'/\F_{31})[5]$
where $E' / \F_{31} : y^2 = x^3 + 30x^2 + 30x + 6$ and an explicit computation of
the $5$-torsion of $E' / \F_{31}$  shows that $h' = 1$. Since $-h/h' \equiv 2^2 \pmod{5}$ we conclude that the type of $(E,E',5)$ is symplectic by Theorem~\ref{thm:mixedReduciton}.
As a sanity check, we apply the original multiplicative criterion at $\ell = 167$; indeed, both curves have multiplicative reduction at~$\ell$ and
$v_{167}(\Delta_m(E)) = v_{167}(\Delta_m(E')) = 1$; thus $v_{167}(\Delta_m(E))/v_{167}(\Delta_m(E')) = 1$ is a square mod~$5$ and the type of $(E,E',5)$ is symplectic by \cite[Proposition 2]{KO}.
\end{example}

\subsection{Reducible examples for $p=5$} The exhaustive search for mod~$5$ congruences between elliptic curves of conductor $< 500000$ discussed above did not yield any example of a reducible congruence whose symplectic type can be determined using our criteria.
For completeness, we will now give examples of application of our criteria to congruences between
elliptic curves with mod~$5$ representation of the form
$\left(\begin{smallmatrix}
  \chi_5 & * \\ 0 & 1
\end{smallmatrix} \right)$.

We start by presenting the general construction of such representations from elliptic curves.

Let  $K$ be a field of characteristic zero, and $t$ an element of $K$. We consider the affine cubic defined over $K$ by the equation
$$y^2+a_1xy+a_3 y=x^3+a_2 x^2+a_4 x+a_6,$$
with
$$a_1=1-t,\quad a_2=a_3=-t,\quad a_4=-5(t^3+2t^2-t),\quad a_6=-t(t^4+10t^3-5t^2+15t-1).$$
The standard invariants $(c_4,c_6,\Delta)$ associated to this Weierstrass equation are
$$c_4=t^4+228t^3+494t^2-228t+1,\quad c_6=-(t^2+1)(t^4-522t^3-10006t^2+522t+1),$$
$$\Delta=t(t^2-11t-1)^5.$$
From now on suppose  $\Delta\neq 0$, so that the cubic defines an elliptic curve $E(t)$ defined over $K$.  Let us recall some properties of the Galois module $E(t)[5]$; see \cite{Kraus} for details.
Let $\mu_5 \subset \Qbar$ be set of $5$-th roots of unity.
Fix $\zeta \in \mu_5$ and let $P_t=(X,Y)$ be the point defined by
$$X=-\frac{3t^2-8t+2}{5}-\frac{t^2-11t-1}{5} (\zeta^2+\zeta^3),$$
$$Y=-\frac{t(t^2-t+14)}{5} + \frac{t^3-9t^2-23t-2}{5} \zeta -\frac{t^3-13t^2+21t+2}{5} \zeta^2 +\frac{t^3-10t^2-12t-1}{5} \zeta^3.$$

This  point belongs to $E(t)$ and  is rational over $K(\mu_5)$.  Moreover, the map
$$i_t : \mu_5\to E(t)[5]$$
defined by $i_t(\zeta)=P_t$,  is an injective group homomorphism which is compatible with the Galois actions on $\mu_5$ and $E(t)[5]$. The field of the $5$-torsion points of $E(t)$
is
$$K(E(t)[5])=K(\mu_5,\root{5}\of{t}).$$
Furthermore,  for all $t$ and $t'$ in $K$, the Galois modules $E(t)[5]$ and $E(t')[5]$ are isomorphic if and only if $K(E(t)[5])=K(E(t')[5])$.
We can now produce examples.

\begin{example}
Let $t=2 \cdot 11^5$ and $t'=2/11^5$ and denote $E_1=E(t)$ and $E_2=E(t')$.
These curves are not part of the LMFDB as their conductors $N_{E_i}$  are
$$N_{E_1}=2 \cdot 11 \cdot 151\cdot 8831\cdot 77801\quad \text{and}\quad N_{E_2}=2\cdot 11\cdot 251\cdot 103350469.$$
From the above, the $\Gal(\overline \Q/\Q)$-modules $E_1[5]$ and $E_2[5]$ are isomorphic. In particular, by taking
$\ell=11$ they are isomorphic as $G_{\Q_{\ell}}$-modules. One easily checks that $E_1$ and $E_2$ have split multiplicative reduction at~$\ell$,
and for $i=1,2$ the conditions
$$v(j_{E_i})\equiv 0 \pmod 5\quad \text{and}\quad \tilde j_i^{\frac{\ell-1}{5}}\not\equiv 1 \pmod{11}$$
are satisfied.
In $\F_{11}^*$ one can take $\zeta=4$. One has
$$ \tilde j_1^{\frac{\ell-1}{5}}\equiv  3 \pmod{\ell}\quad \text{and}\quad \tilde j_2^{\frac{\ell-1}{5}}\equiv 4\pmod{\ell}.$$
Note that, from Theorem~\ref{thm:multiplicative}, we recover here the fact that the $G_{\Q_{\ell}}$-modules $E_1[5]$ and $E_2[5]$ are isomorphic. Consequently, we have $h_1=4$ and $h_2=1$.
Hence, $h_1/h_2$ is a square modulo~$5$ and we deduce from Theorem~\ref{thm:multiplicative} that  $E_1[5]$ and $E_2[5]$ are symplectically isomorphic.
\end{example}

\begin{example} Let $t''=2$ and $E_3=E(t'')$. This is the curve with Cremona label~38b2, hence it has good reduction at $\ell = 11$.
As above, the $\Gal(\overline \Q/\Q)$-modules $E_1[5]$, $E_2[5]$ and $E_3[5]$ are isomorphic. Using {\tt Magma}, we check that in a suitable symplectic basis of $E_3[5]$ we have $h_3=1$. Let $h_1$ and~$h_2$ be as in the previous example. In particular, $-h_1/h_3$ and $-h_2/h_3$ are squares modulo $5$ and from Theorems~\ref{thm:mixedReduciton} and the previous example, we conclude that the Galois modules $E_1[5]$, $E_2[5]$ and $E_3[5]$ are pairwise symplectically isomorphic.
\end{example}


\begin{thebibliography}{10}








\bibitem{magma}
W.~Bosma, J.~Cannon, and C.~Playoust.
\newblock The {Magma} algebra system. {I}. the user language.
\newblock {\em Journal of Symbolic Computation}, 24(3-4):235--265, 1997.
\newblock Computational algebra and number theory (London, 1993).








\bibitem{Centeleghe}
T.~G. Centeleghe.
\newblock Integral Tate modules and splitting of primes in torsion fields of
  elliptic curves.
\newblock {\em International Journal of Number Theory}, 2012.








\bibitem{CremonaFreitas}
J.~Cremona and N.~Freitas.
\newblock Global methods for the symplectic type of congruences between elliptic curves.
\newblock {\em Rev. Mat. Iber.}, 38(1):1--32, 2022.




\bibitem{fisherp5}
Tom Fisher.
\newblock Invariant theory for the elliptic normal quintic, I. Twists of $X(5)$.
\newblock {\em Mathematische Annalen}, 356:589--616, 2013.












\bibitem{FKdegree}
N.~Freitas and A.~Kraus.
\newblock On the degree of the $p$-torsion field of elliptic curves over $\Q_\ell$ for $\ell \neq p$.
\newblock {\em Acta Arithmetica}, 195(1):13--55, 2020.




\bibitem{symplectic}
N.~Freitas and A.~Kraus.
\newblock On the symplectic type of isomorphisms of the $p$-torsion of elliptic curves.
\newblock {\em Memoirs of AMS}, (2022), no. 1361.








\bibitem{git} N.~Freitas, A.~Kraus and I.~S\'anchez-Rodr\'iguez.
\newblock Supporting code for this paper.
\newblock available on \href{https://github.com/IgnasiSanchez/SymplecticCriteria}{Github}




















\bibitem{FukudaWeilPairing}
K.~Fukuda and S.~Yoshikawa.
\newblock The 12th roots of the discriminant of an elliptic curve and the
  torsion points.
\newblock {\em Int. J. Number Theory}, 13(4):1037--1060, 2017.






\bibitem{Kraus} A. Kraus.
\newblock La courbe modulaire $X(5)$.
\newblock {\em Publications Math\'ematiques de l'Universit\'e Paris et Marie Curie}~{\bf{108}}, Groupe d'\'etude sur les probl\`emes diophantiens, expos\'e 6 (1994).
\newblock  Available~\href{https://github.com/IgnasiSanchez/SymplecticCriteria}{online}


\bibitem{KO}
A.~Kraus and J.~Oesterl\'e.
\newblock Sur une question de {B}. {M}azur.
\newblock {\em Math. Ann.}, 293(2):259--275, 1992.








\bibitem{lmfdb}
The {LMFDB Collaboration}.
\newblock The {L}-functions and modular forms database.
\newblock \url{https://www.lmfdb.org}, 2025, [Online; accessed 25 July 2025]






\bibitem{RS95}
K.~Rubin and A.~Silverberg.
\newblock Families of elliptic curves with constant mod p representations.
\newblock In {\em Elliptic curves, modular forms, \& Fermat's last theorem (Hong Kong, 1993)}, Ser. Number Theory, I, pages 148--161. Internat. Press, Cambridge, MA, 1995.








\bibitem{SilvermanII}
J.~H. Silverman.
\newblock {\em Advanced topics in the arithmetic of elliptic curves}, volume
  151 of {\em Graduate Texts in Mathematics}.
\newblock Springer-Verlag, New York, 1994.








\end{thebibliography}
\end{document}